    \crefname{ex}{Example}{Examples}
    \crefname{thm}{Theorem}{Theorems} 
    \crefname{lem}{Lemma}{Lemmas}
    \crefname{prop}{Proposition}{Propositions}
    \crefname{cor}{Corollary}{Corollaries} 
    \crefname{conj}{Conjecture}{Conjectures} 
    \crefname{defn}{Definition}{Definitions}
    \crefname{rmk}{Remark}{Remarks} 
	\newtheorem{thm}{Theorem}[section]
	\newtheorem{lem}[thm]{Lemma}
	\newtheorem{prop}[thm]{Proposition}
	\newtheorem*{thm*}{Theorem}
	\newtheorem*{cor*}{Corollary}
	\theoremstyle{definition} %no italics
		\newtheorem{defn}[thm]{Definition}
		\newtheorem{ex}[thm]{Example}
    	\newtheorem{rmk}[thm]{Remark}	
    \newcommand{\df}[1]{{\bf\emph{{#1}}}}
        \tikzset{%
        fwdrxn/.style={very thick, arrows={-Stealth[length=5pt,width=5pt]}},
        revrxn/.style={very thick, arrows={-Stealth[length=5pt,width=5pt,left]}},
        newt/.style={turq, opacity=0.15}
        }
        \tikzset{near start abs-right/.style={xshift=1cm}}
        \tikzset{near start abs-left/.style={xshift=-3.5cm}}
        \tikzset{near start abs-up/.style={yshift=1.5cm}}
        \tikzset{near start abs-down/.style={yshift=-1cm}}
	\newcommand\blue[1]{{\textcolor{blue}{#1}}}
	\definecolor{orange}{RGB}{250, 140, 0}
	\definecolor{turq}{RGB}{0, 160, 160}
	\definecolor{violet}{RGB}{164, 98, 234}
    \definecolor{viridisyellow}{RGB}{253,231,36}
    \definecolor{viridisyellowpale}{RGB}{239,223,81}
    \definecolor{viridisgreen}{RGB}{121,209,81}
        \definecolor{hlgreen}{RGB}{16,115,16}
    \definecolor{viridisturq}{RGB}{34,167,132}
    \definecolor{viridisblue}{RGB}{64,67,135}
    \definecolor{viridisviolet}{RGB}{68,1,84}
	\definecolor{ratecnst}{RGB}{172,172,172}
\newcommand{\eq}[1]{\begin{align*}#1\end{align*}}
	\newcommand{\eqn}[1]{\begin{align}#1\end{align}}  
\newcommand{\st}{\colon}                % such that // : or \mid	
\newcommand\mc[1]{\mathcal{#1}}
\newcommand\mrm[1]{\mathrm{#1}}
\newcommand{\rr}{\ensuremath{\mathbb{R}}}   
\newcommand{\zz}{\ensuremath{\mathbb{Z}}}
\newcommand{\nn}{\ensuremath{\mathbb{N}}}
\renewcommand{\epsilon}{\varepsilon}	% varepsilon
\renewcommand{\phi}{\varphi}			% varphi
\DeclareMathOperator{\diag}{diag}		% diagonal
\DeclareMathOperator{\Span}{span}		% span
\newcommand{\kk}{\kappa}
\newcommand{\vv}[1]{{\boldsymbol{#1}}}  % vector 
\newcommand{\mm}[1]{\mathbf{#1}}               % matrix
\newcommand{\rrp}{\rr_{\geq}}
\newcommand{\rrpp}{\rr_{>}}
\newcommand{\zzp}{\zz_{\geq}}
\newcommand{\RR}{\ensuremath{\rightleftharpoons}}
\newcommand{\xx}{\vv x}
\newcommand{\yy}{\vv y}
\newcommand{\cf}[1]{\textsf{#1}}
\title{
   The disguised toric locus and affine equivalence of reaction networks
}
\author[1,4]{
        Sabina J. Haque%
}\author[2,$\dagger$]{
        Matthew Satriano% 
}\author[3]{
        Miruna-\c Stefana Sorea%
}\author[4]{
        Polly Y. Yu%
}
\affil[1]{\small Department of Systems Biology, Harvard University}
\affil[2]{\small Department of Pure Mathematics, University of Waterloo, Canada}
\affil[3]{\small SISSA (Scuola Internazionale Superiore di Studi Avanzati), Trieste, Italy and ULBS, Sibiu, Romania}
\affil[4]{\small NSF--Simons Center for Mathematical and Statistical Analysis of Biology, Harvard University }
\date{} 
\begin{document}
\maketitle
\footnotetext[2]{{MS was partially supported by a Discovery Grant from the National Science and Engineering Research Council of Canada and a Mathematics Faculty Research Chair.}}
\renewcommand*{\thefootnote}{\arabic{footnote}}

\begin{abstract}
    Under the assumption of mass-action kinetics, a dynamical system may be induced by several different reaction networks and/or parameters. It is therefore possible for a mass-action system to exhibit complex-balancing dynamics without being weakly reversible or satisfying toric constraints on the rate constants; such systems are called disguised toric dynamical systems. We show that the parameters that give rise to such systems are preserved under invertible affine transformations of the network. We also consider the dynamics of arbitrary mass-action systems under affine transformations, and show that there is a bijection between their sets of positive steady states, although their qualitative dynamics can differ substantially.
\end{abstract} 

\section{Introduction}
\label{sec:intro}

Mathematical models in biology and chemistry often take the form of systems of ODEs. In many applications, including mass-action kinetics in chemistry as well as population dynamics in ecology and infectious disease models, the system of ODEs arises from an interaction graph. Among such interaction network models, complex-balanced systems are some of the best understood. They are characterized by their network structure~\cite{horn1972necessary}, and their dynamical and algebraic properties are well known. Dynamically, each complex-balanced system has exactly one linearly stable positive steady state up to conservation relations~\cite{HornJackson1972, SiegelJohnston2008_notes}. In fact, the steady state is conjectured to be globally stable~\cite{Horn1974_GAC, CraciunNazarovPantea2013}. Algebraically, complex-balanced systems enjoy a toric structure: both their steady state set as well as the set of parameters required for complex-balancing are toric varieties~\cite{MR1874271,CraciunDickensteinShiuSturmfels2009}; this latter set is known as the \emph{toric locus} and is well understood in terms of the network geometry and topology.

For all the remarkable algebraic and dynamical properties of complex-balanced systems, it is natural to ask whether a mass-action system that is \emph{not} complex-balanced can enjoy these properties. This is possible since a system of differential equations is not uniquely associated to one reaction network even though a network dictates its governing dynamics, a fact sometimes referred to as \lq\lq the fundamental dogma of chemical kinetics\rq\rq~\cite{CraciunPantea2008}.  This lack of identifiability leads to the following notion: two reaction networks along with certain choices of parameters are said to be \emph{dynamically equivalent} if they generate the same dynamics. We can now ask whether a mass-action system is dynamically equivalent to, and thus shares all the properties of, a complex-balanced system~\cite{CraciunJinYu2019, SzederkenyiHangos2011a, BCS22}.

For a fixed network, its \emph{disguised toric locus} is the set of parameters such that the resulting system is dynamically equivalent to a complex-balanced system. Unlike the toric locus, the disguised toric locus is quite poorly understood. For example, computing analytically the disguised toric locus of the complete graph on $3\cf{X}$, $2\cf{X}+\cf{Y}$, $\cf{X}+2\cf{Y}$, and $3\cf{Y}$ is already non-trivial~\cite{BCS22}. 

In the present work, we aim to simplify the task of computing the disguised toric locus of a given network, by relating it to that of a different network. More precisely, we relate the disguised toric loci of \emph{affinely equivalent} reaction networks (\Cref{def:affine-trans}), since preserving collinear vertices is necessary for dynamical equivalence (see \Cref{rmk:collinear}). It was shown in \cite[Theorem 6.1]{craciun2020structure} that the toric locus is unchanged when a network is mapped under an invertible affine transformation. Here, we generalize this result: the disguised toric locus is also preserved under invertible affine transformations. As a consequence of our result, to compute the disguised toric locus of a complicated network, it suffices to compute it for an affinely equivalent, but simpler, network.

Moreover, affine transformations also preserve dynamically important structural features of networks, such as the property of being strongly endotactic; in particular, this implies that properties such as persistence and permanence are preserved by such network transformations~\cite{CraciunDickensteinShiuSturmfels2009, CraciunNazarovPantea2013}. One may further ask if other qualitative dynamics are preserved, which we consider in \Cref{sec:discussion}.

The present work can also be understood in the context of how network geometry and topology influence qualitative dynamics. There has been a longstanding tradition of relating network structure with dynamics. Examples include the existence of a positive steady state for weakly reversible networks~\cite{Boros2019}, parametrization of the steady state sets~\cite{PerezMillanDickensteinShiuConradi2012, ThomsonGunawardena2009, Johnston2014}, persistence and permanence of endotactic and strongly endoctactic networks~\cite{GopalkrishnanMillerShiu2014, Craciun2019, CraciunDeshpande2020}, establishing multistationarity~\cite{JoshiShiu2015, JoshiShiu2017, BanajiPantea2016, BanajiPantea2018, MR4413372}, and precluding multistationarity~\cite{CraciunFeinberg2005, CraciunFeinberg2006}. At times, conclusions are drawn using information from both the topology and geometry of the network~\cite{Johnston2014, JohnstonMullerPantea2019,  MullerRegensburger2012, ShinarFeinberg2010, CraciunJinYu_STN}. In this paper, we focus on network geometry as it relates to complex-balancing dynamics. 

To consider the effect of network geometry on dynamical equivalence, we further study invertible projective transformations, which also preserve collinearity. We find that such maps preserve neither dynamical equivalence, complex-balancing, nor the property of being disguised toric (\Cref{sec:discussion}). Therefore, affine transformations form the largest class of graph isomorphisms that preserve the disguised toric locus.

This work is organized as follows. We review some preliminary and relevant notions about reaction networks and mass-action systems in \Cref{sec:crn}, and explain more precisely what we mean by the geometry of a reaction network. We discuss dynamical equivalence in \Cref{sec:DE}, and complex-balanced and disguised toric systems in \Cref{sec:CB}. After introducing affine equivalence of reaction networks in \Cref{sec:main}, we prove our main result (\Cref{thm:main}) and an analogous result for detailed-balanced systems. \Cref{sec:discussion} consists of two parts. In the first half, we study the qualitative dynamics of arbitrary affinely equivalent mass-action systems, and show that there is a bijection between the sets of positive steady states, even though other interesting dynamics such as stability and multistationarity may be lost. In the second part of \Cref{sec:discussion}, we consider invertible projective transformations as generalizations of affine transformations.

\section{Preliminaries} 
\label{sec:prelim}

In this section, we summarize the relevant standard notions and notation from the theory of reaction networks. For details, we refer the reader to, for instance \cite{HornJackson1972, Feinberg1987, YuCraciun, feinberg, waage1986studies}. Throughout this work, the symbols $\rrp^n$, $\rrpp^n$ denote the sets of vectors with non-negative and positive components respectively. For $\xx \in \rrpp^n$ and $\yy \in \rr^n$, we use the multi-index notation $\xx^{\yy} \coloneqq x_1^{y_1}x_2^{y_2}\cdots x_n^{y_n}$.

\subsection{Reaction networks and mass-action systems}
\label{sec:crn} 

We model the long-term dynamical behaviour of species concentrations, using an autonomous ODE system, whose terms are dictated by a reaction network. We view reaction networks as \emph{Euclidean embedded graphs}, i.e., the network is embedded in $\rr^n$ according to the \emph{stoichiometric coefficients} attached to each vertex~\cite{Craciun2019}. This formulation is equivalent to the classical definition of a reaction network as a triple $(\mc S, \mc C, \mc R)$, where $\mc S$ is the set of species, $\mc C$ the set of complexes, and $\mc R$ the set of reactions. Complexes are formal linear combinations of species; for instance, $3\cf{X}_1+4\cf{X}_2$ is a complex. In our framework, the stoichiometric coefficients in the formal linear combination are entries in a vector $\yy_i$, living in the Euclidean space, whose  dimension is the number of species interacting in the network. We differ from the classical definition only in that we do not require the stoichiometric coefficients to be non-negative integers, but instead can be any real numbers.

\begin{defn}
A \df{reaction network} (or \df{network} for short) is a directed graph $G = (V,E)$, where  $V \subset \rr^n$ and $E \subseteq V \times V$. An edge $(\yy_i,\yy_j) \in E$ is denoted by $(i,j)$ or $\yy_i \to \yy_j$. A vertex $\yy_i$ is a \df{source vertex} if there exists another vertex $\yy_j\in V$ such that $(\yy_i,\yy_j) \in E$. The set of source vertices is denoted by $V_s$. A vertex is also called a \df{complex}, while an edge is called a \df{reaction}.
\end{defn}

The network $G$ is said to be \df{weakly reversible} if every connected component is strongly connected. Weak reversibility implies $V_s = V$, while the converse is not true in general. For example, the network $\yy_1 \to \yy_2 \RR \yy_3$ is not weakly reversible, but $V_s = V$. 

We work under the assumption of mass-action kinetics.

\begin{defn}
Let $G$ be a network, and $\vv \kk \in \rrpp^{E}$ be a vector of rate constants. The \df{mass-action system}  is the weighted directed graph $(G,\vv\kk)$ with induced dynamics on $\rrpp^n$ governed by 
\eqn{ \label{eq:mas}
    \dot{\xx} &= \vv F_{(G,\vv\kk)}(\xx) ,
\intertext{where} 
    \vv F_{(G,\vv\kk)}(\xx) &\coloneqq \sum_{(i,j) \in E} \kk_{ij} \xx^{\yy_i} (\yy_j - \yy_i). \nonumber 
}
\end{defn}

In what follows, unless otherwise specified, $\kk_{ij}$ is the rate constant of the edge $\yy_i \to \yy_j$. Occasionally, it is more convenient to order the edges; then we let $\kk_i$ be the rate constant of the $i$th edge. 

If we further suppose that $V \subset \zzp^n$ (as opposed to $V \subset \rr^n$), then the positive orthant $\rrpp^n$ is \emph{forward-invariant} under \eqref{eq:mas}, i.e., if $\xx(t)$ is a solution to the initial value problem \eqref{eq:mas} with $\xx(0) \in \rrpp^n$, then $\xx(t) \in \rrpp^n$ for all $t \geq 0$~\cite[Lemma II.1]{Sontag2001}. Since the \df{stoichiometric subspace} $S = \Span\{ \yy_j - \yy_i \st \yy_i \to \yy_j \in E\}$ contains $\dot{\xx}$, the solution $\xx(t)$ is confined to the \df{stoichiometric compatibility class} $(\xx(0) + S)\cap \rrpp^n$.

In this work we take $V \subset \rr^n$, as opposed to the more classical assumption of $V \subset \zzp^n$. In formal chemical kinetics, it is often assumed that $V \subset \zzp^n$, so that \eqref{eq:mas} can be written as $\dot{x}_i = f_i(\xx) - x_i g_i(\xx)$, where $f_i(\xx)$ and $g_i(\xx)$ are positive sums of monomials. Indeed, any such system arises from a mass-action system whose construction is given by the \lq\lq Hungarian Lemma\rq\rq~\cite{harstoth1979} with $V \subset \zzp^n$. With $V \subset \zzp^n$, the positive orthant is forward-invariant, and the solution to any initial value problem is unique.

Horn and Jackson in their seminal work~\cite{HornJackson1972} generalized it to the case $V \subset \rrp^n$, with identical dynamical results still holding at this level of generality; in particular, the positive orthant is forward-invariant. If in addition we have $V \subset (\{0\} \cup [1,\infty))^n$, then the right-hand side is Lipschitz, and the solution to any initial value problem is unique~\cite{Sontag2001}.

Here we further relax the assumption to $V \subset \rr^n$. In this case, we are only interested in the qualitative dynamics on $\rrpp^n$, and the set of positive steady states. Note that on $\rrpp^n$, any trajectory curve of \eqref{eq:mas} is also a trajectory curve of the system $\dot{\xx} = x_1^N\cdots x_n^N \cdot \vv F_{(G,\vv\kk)}(\xx)$ for $N > 1$, since the two systems are related by a non-vanishing positive scalar field. Moreover, for the system $\dot{\xx} = x_1^N\cdots x_n^N \cdot \vv F_{(G,\vv\kk)}(\xx)$,  with  sufficiently large $N \in \nn$, the entries of its vertices $\yy_i$ are positive, so its vertex set lies in $\rrp^n$. It is important to emphasize again that with $V\subset \rr^n$ (as opposed to $V \subset \rrp^n$), only the trajectories of the systems  within $\rrpp^n$ are preserved, and it is possible that a solution will leave the positive orthant in finite time. However, this more general setting of $V \subset \rr^n$ allows us to study the qualitative dynamics of a more general class of ODEs on $\rrpp^n$.

\begin{ex}
\label{ex:intro-mas}
    We illustrate the above definitions with a concrete example, and later return to this example at the end of \Cref{sec:main}. Consider the network shown in \Cref{fig:intro-mas}, with six vertices and four edges. It is clearly not weakly reversible. This network was considered in \cite[Example 5.2]{CraciunJinYu2019}, and a generalized version of it was studied in \cite[Section 6]{BCS22}.
    
    Let $\kk_i > 0$ be the rate constant of the reaction with source $\yy_i$. Then the associated dynamics is given by 
    \eq{ 
        \begin{pmatrix}
        \dot{x} \\ \dot{y} 
        \end{pmatrix}
        &= \kk_1 \begin{pmatrix*}[r]
        1 \\1 
        \end{pmatrix*} + \kk_2 x^3 \begin{pmatrix*}[r]
        -1 \\1 
        \end{pmatrix*} + \kk_3 x^3y^2 \begin{pmatrix*}[r]
        -1 \\-1 
        \end{pmatrix*} + \kk_4 y^2 \begin{pmatrix*}[r]
        1 \\-1 
        \end{pmatrix*}
        \\&= \begin{pmatrix}
        \kk_1 -\kk_2 x^3 - \kk_3 x^3y^2 + \kk_4 y^2 \\ 
        \kk_1 +\kk_2 x^3 - \kk_3 x^3y^2 - \kk_4 y^2 
        \end{pmatrix}.
    }
    Since $V_s \subset \zzp^n$, the right-hand side of the ODE system is polynomial. In this paper, we allow $V_s \subset \rr^n$ more generally. 
\begin{figure}[h!]
\centering
    \begin{tikzpicture}[scale=1]
        \draw [step=1, gray!50!white, thin] (0,0) grid (3.5,2.5);
            \draw [->, gray] (0,0)--(3.5,0);
            \draw [->, gray] (0,0)--(0,2.5);
            \node [inner sep=2pt] (1) at (0,0) {\blue{$\bullet$}};
            \node [inner sep=2pt] (2) at (3,0) {\blue{$\bullet$}};
            \node [inner sep=2pt] (4) at (0,2) {\blue{$\bullet$}};
            \node [inner sep=2pt] (3) at (3,2) {\blue{$\bullet$}};
            \node [inner sep=2pt] (5) at (1,1) {\blue{$\bullet$}};
            \node [inner sep=2pt] (6) at (2,1) {\blue{$\bullet$}};
            
            \node at (0,0) [left] {$\yy_1$};
            \node at (0,2) [left] {$\yy_4$};
            \node at (3,0) [above right] {$\yy_2$};
            \node at (3,2) [right] {$\yy_3$};
            \node at (1,1) [above] {\,\,$\yy_5$};
            \node at (2,1) [above] {$\yy_6$\,\,};
            
            \draw [-{stealth}, thick, blue, transform canvas={xshift=0ex, yshift=0ex}] (1)--(5) ;
            \draw [-{stealth}, thick, blue, transform canvas={xshift=-0ex, yshift=-0ex}] (4)--(5) ;
            
            \draw [-{stealth}, thick, blue, transform canvas={xshift=0ex, yshift=0ex}] (2)--(6) ;
            \draw [-{stealth}, thick, blue, transform canvas={xshift=-0ex, yshift=-0ex}] (3)--(6) ;
	\end{tikzpicture}
    \caption{A reaction network in $\rr^2$. See \Cref{ex:intro-mas} for its associated dynamics.}
    \label{fig:intro-mas}
\end{figure}
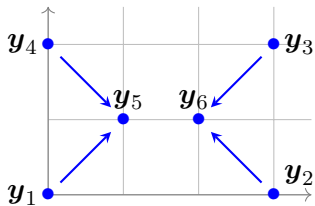 
\end{ex}

\subsection{Dynamical equivalence}
\label{sec:DE}

A mass-action system $(G,\vv\kk)$ determines a system of ODEs by \eqref{eq:mas}. However, many different mass-action systems may give rise to the same system of ODEs~\cite{CraciunPantea2008}. Such mass-action systems are said to be dynamically equivalent (see \Cref{def:dynEquivalence} below). 

Let $G = (V,E)$ and $G' = (V',E')$ be two networks in $\rr^n$. We do not assume either graph is weakly reversible. Let $V_s$ and $V'_s$ denote the sets of source vertices for the two networks respectively. The mass-action systems $(G,\vv\kk)$ and $(G', \vv\kk')$ have the same system of ODEs if $\vv F_{(G,\vv\kk)}(\xx) = \vv F_{(\vv G',\vv\kk')}(\xx)$. In expanded form, we can rearrange the right-hand sides by the distinct generalized monomials $\xx^{\yy_i}$ that appear, i.e., by sorting by their source vertices. Hence $(G,\vv\kk)$ is dynamically equivalent to $(G',\vv\kk')$ if and only if 
\eq{ 
    \sum_{\yy_i \in V_s} \xx^{\yy_i} \sum_{\yy_j \in V} \kk_{ij} (\yy_j - \yy_i) 
    = \sum_{\yy_i \in V_s'} \xx^{\yy_i} \sum_{\yy'_j \in V'} \kk'_{ij} (\yy_j' - \yy_i).
}
Since the real-valued functions $\{\xx^\yy \st \yy\in V_s\cup V_s{}'\}$ are linearly independent, we arrive at the following formal definition of dynamical equivalence~\cite{CraciunJinYu2019}. 

\begin{defn}\label{def:dynEquivalence}
Two mass-action systems $(G,\vv\kk)$ and $(G',\vv\kk')$ are \df{dynamically equivalent} if for every $\yy_i \in V_s \cup V_s'$, we have 
\eqn{\label{eq:DE}  
    \sum_{\yy_j \in V} \kk_{ij} (\yy_j - \yy_i) = \sum_{\yy'_j \in V'} \kk'_{ij} (\yy_j' - \yy_i), 
}
where by convention $\kk_{ij} = 0$ if $(\yy_i,\yy_j) \not\in E$, and similarly $\kk'_{ij} = 0$ if $(\yy_i,\yy'_j) \not\in E'$. 
\end{defn}

Since dynamical equivalence is controlled by the weighted sum of vectors originating from each source vertex $\yy_i \in V_s$ with respect to a realization $(G,\vv\kk)$, we give such sums a name.

\begin{defn}
The \df{net reaction vector} of $\yy_i \in V_s$ in a mass-action system $(G,\vv\kk)$ is 
\eqn{
    \vv w_i \coloneqq \sum_{\yy_j \in V} \kk_{ij} (\yy_j - \yy_i). 
}
If $\yy_i \not\in V_s$, then by default $\vv w_i = \vv 0$. 
\end{defn}

\begin{figure}[h!]
\centering
\begin{subfigure}[b]{0.23\textwidth}\centering 
    \begin{tikzpicture}[scale=1.75]
        \draw [step=1, gray!50!white, thin] (0,0) grid (1.25,1.25);
        \node at (0,1.35) {};
            \draw [->, gray] (0,0)--(1.25,0);
            \draw [->, gray] (0,0)--(0,1.25);
            \node [inner sep=2pt] (1) at (0,0) {\blue{$\bullet$}};
            \node [inner sep=2pt] (2) at (1,0) {\blue{$\bullet$}};
            \node [inner sep=2pt] (3) at (1,1) {\blue{$\bullet$}};
            \node [inner sep=2pt] (4) at (0,1) {\blue{$\bullet$}};
            \node [inner sep=2pt] (5) at (0.5,0.5) {\blue{$\bullet$}};
            
            \node at (0,0) [left] {$\vv y_1$};
            \node at (1,0) [below] {$\vv y_2$};
            \node at (1,1) [above] {$\vv y_3$};
            \node at (0,1) [left] {$\vv y_4$};
            \node at (0.5,0.5) [left] {$\vv y_5$\,\,};
            
            \draw [-{stealth}, thick, blue, transform canvas={xshift=-0.25ex, yshift=0.25ex}] (1)--(5) ;
            \draw [-{stealth}, thick, blue, transform canvas={xshift=0.25ex, yshift=-0.25ex}] (5)--(1) ;
            \draw [-{stealth}, thick, blue, transform canvas={xshift=-0.25ex, yshift=0.25ex}] (5)--(3) ;
            \draw [-{stealth}, thick, blue, transform canvas={xshift=0.25ex, yshift=-0.25ex}] (3)--(5) ;
            
            \draw [-{stealth}, thick, blue, transform canvas={}] (2)--(5) ;
            \draw [-{stealth}, thick, blue, transform canvas={}] (4)--(5) ;
    \end{tikzpicture}
    \caption{ \,\,$G_\mrm{a}$}
    \label{fig:DE-notWR}
\end{subfigure}
\begin{subfigure}[b]{0.23\textwidth}\centering 
    \begin{tikzpicture}[scale=1.75]
        \draw [step=1, gray!50!white, thin] (0,0) grid (1.25,1.25);
            \draw [->, gray] (0,0)--(1.25,0);
            \draw [->, gray] (0,0)--(0,1.25);
            \node [inner sep=2pt] (1) at (0,0) {\blue{$\bullet$}};
            \node [inner sep=2pt] (2) at (1,0) {\blue{$\bullet$}};
            \node [inner sep=2pt] (3) at (1,1) {\blue{$\bullet$}};
            \node [inner sep=2pt] (4) at (0,1) {\blue{$\bullet$}};
            
           \node at (0,0) [left] {\phantom{$\vv v_1$}};
           \node at (1,0) [below] {\vphantom{$\vv v_2$}};
            
            \draw [-{stealth}, thick, blue, transform canvas={xshift=-0.25ex, yshift=0.25ex}] (1)--(3) ;
            \draw [-{stealth}, thick, blue, transform canvas={xshift=0.25ex, yshift=-0.25ex}] (3)--(1) ;
            \draw [-{stealth}, thick, blue, transform canvas={xshift=-0.25ex, yshift=-0.25ex}] (2)--(4) ;
            \draw [-{stealth}, thick, blue, transform canvas={xshift=0.25ex, yshift=0.25ex}] (4)--(2) ;
    \end{tikzpicture}
    \caption{ \,\,$G_\mrm{b}$}
    \label{fig:DE-b}
\end{subfigure}
\begin{subfigure}[b]{0.23\textwidth}\centering 
    \begin{tikzpicture}[scale=1.75]
        \draw [step=1, gray!50!white, thin] (0,0) grid (1.25,1.25);
            \draw [->, gray] (0,0)--(1.25,0);
            \draw [->, gray] (0,0)--(0,1.25);
            \node [inner sep=2pt] (1) at (0,0) {\blue{$\bullet$}};
            \node [inner sep=2pt] (2) at (1,0) {\blue{$\bullet$}};
            \node [inner sep=2pt] (3) at (1,1) {\blue{$\bullet$}};
            \node [inner sep=2pt] (4) at (0,1) {\blue{$\bullet$}};
            
           \node at (0,0) [left] {\phantom{$\vv v_1$}};
           \node at (1,0) [below] {\vphantom{$\vv v_2$}};
            
            \draw [-{stealth}, thick, blue, transform canvas={xshift=-0.25ex, yshift=0.25ex}] (1)--(3) ;
            \draw [-{stealth}, thick, blue, transform canvas={xshift=0.25ex, yshift=-0.25ex}] (3)--(1) ;
            \draw [-{stealth}, thick, blue, transform canvas={xshift=-0.25ex, yshift=-0.25ex}] (2)--(4) ;
            \draw [-{stealth}, thick, blue, transform canvas={xshift=0.25ex, yshift=0.25ex}] (4)--(2) ;
            
            \draw [-{stealth}, thick, blue, transform canvas={yshift=0.3ex}] (1)--(2) ;
            \draw [-{stealth}, thick, blue, transform canvas={yshift=-0.3ex}] (2)--(1) ;
            \draw [-{stealth}, thick, blue, transform canvas={yshift=0.3ex}] (4)--(3) ;
            \draw [-{stealth}, thick, blue, transform canvas={yshift=-0.3ex}] (3)--(4) ;
            
            \draw [-{stealth}, thick, blue, transform canvas={xshift=0.3ex}] (1)--(4) ;
            \draw [-{stealth}, thick, blue, transform canvas={xshift=-0.3ex}] (4)--(1) ;
            \draw [-{stealth}, thick, blue, transform canvas={xshift=0.3ex}] (2)--(3) ;
            \draw [-{stealth}, thick, blue, transform canvas={xshift=-0.3ex}] (3)--(2) ;
    \end{tikzpicture}
    \caption{ \,\,$G_\mrm{c}$}
    \label{fig:DE-c}
\end{subfigure}
\caption{Three networks with vertices as labelled in (a). For rate constants satisfying some linear constraints as shown in \Cref{ex:DE}, the corresponding mass-action systems can be made dynamically equivalent.}
\label{fig:DE}
\end{figure}
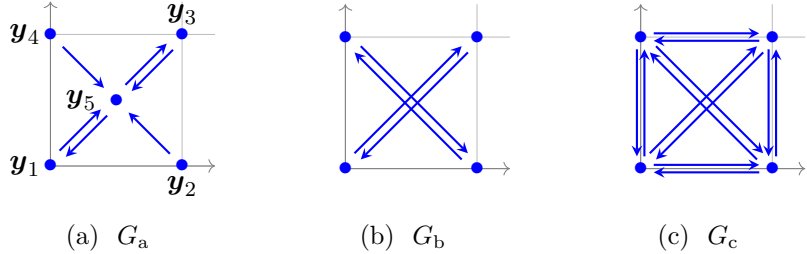

\begin{ex}
\label{ex:DE}
    Consider the three networks $G_\mrm{a}$, $G_\mrm{b}$, $G_\mrm{c}$ in \Cref{fig:DE}, with vertices $\vv y_1 = (0,0)$, $\vv y_2 = (1,0)$, $\vv y_3 = (1,1)$, $\vv y_4 = (0,1)$, and where applicable $\vv y_5 = (\frac{1}{2}, \frac{1}{2})$. Let $\alpha_{ij}$ be the rate constant of $\vv y_i \to \vv y_j$ in (a); similarly, denote rate constants for (b) as $\beta_{ij}$, and for (c) as $\gamma_{ij}$. 
    
    For $(G_\mrm{a}, \vv \alpha)$ to be dynamically equivalent to $(G_\mrm{b}, \vv\beta)$, we require that 
    \begin{gather*} 
        \alpha_{15} \begin{pmatrix*}[r]  1/2 \\ 1/2 \end{pmatrix*}= \beta_{13} \begin{pmatrix*}[r]  1 \\ 1 \end{pmatrix*}, \quad 
        \alpha_{25} \begin{pmatrix*}[r]  -1/2 \\ 1/2 \end{pmatrix*}= \beta_{24} \begin{pmatrix*}[r]  -1 \\ 1 \end{pmatrix*}, 
        \\
        \alpha_{35} \begin{pmatrix*}[r]  -1/2 \\ -1/2 \end{pmatrix*}= \beta_{31} \begin{pmatrix*}[r]  -1 \\ -1 \end{pmatrix*}, \quad 
        \alpha_{45} \begin{pmatrix*}[r]  1/2 \\ -1/2 \end{pmatrix*}= \beta_{42} \begin{pmatrix*}[r]  1 \\ -1 \end{pmatrix*}, 
        \\
        \alpha_{53} \begin{pmatrix*}[r]  1/2 \\ 1/2 \end{pmatrix*} + \alpha_{51}  \begin{pmatrix*}[r]  -1/2 \\ -1/2 \end{pmatrix*} =  \begin{pmatrix*}[r]  0 \\ 0 \end{pmatrix*}. 
    \end{gather*}
    In other words, $(G_\mrm{a}, \vv \alpha)$ is dynamically equivalent to $(G_\mrm{b}, \vv\beta)$ if and only if
    \eq{ 
        \alpha_{15} =2 \beta_{13}, \quad  \alpha_{25} = 2\beta_{24}, 
        \quad 
        \alpha_{35} = 2\beta_{31} \quad  \alpha_{45} = 2\beta_{42}, 
        \quad 
        \alpha_{53} - \alpha_{51} = 0.  
    }
    In the case when they are dynamically equivalent, the associated dynamical system is 
    \eq{ 
        \begin{pmatrix} \dot{x} \\ \dot{y} \end{pmatrix} 
        =  \begin{pmatrix*}[r]  \beta_{13} \\ \beta_{13} \end{pmatrix*} + 
        x \begin{pmatrix*}[r]  -\beta_{24} \\ \beta_{24} \end{pmatrix*} + 
        xy \begin{pmatrix*}[r]  -\beta_{31} \\ -\beta_{31} \end{pmatrix*} + 
        y \begin{pmatrix*}[r]  \beta_{42} \\ -\beta_{42} \end{pmatrix*} .
    }
    That $(G_\mrm{a}, \vv \alpha)$ and $(G_\mrm{b}, \vv\beta)$ are dynamically equivalent can be readily seen from \Cref{fig:DE-notWR,fig:DE-b}. The reaction vectors are scaled to double in length, while the rate constants are scaled accordingly.
    
    By a similar calculation, for $(G_\mrm{b}, \vv\beta)$ and $(G_\mrm{c}, \vv\gamma)$ to be dynamically equivalent, the rate constants must satisfy
    \begin{gather*}
        \begin{pmatrix*}[r]  \beta_{13} \\ \beta_{13} \end{pmatrix*} = \begin{pmatrix*}[r]  \gamma_{13} + \gamma_{12} \\ \gamma_{13} + \gamma_{14} \end{pmatrix*}, 
        \quad %%%
        \begin{pmatrix*}[r]  -\beta_{24} \\ \beta_{24}  \end{pmatrix*} = 
        \begin{pmatrix*}[r]  -\gamma_{24} - \gamma_{21} \\ \gamma_{24}  + \gamma_{23} \end{pmatrix*}, 
        \\[3pt] %%%
        \begin{pmatrix*}[r]  - \beta_{31}  \\ - \beta_{31}  \end{pmatrix*} = 
        \begin{pmatrix*}[r]  -\gamma_{31}- \gamma_{34} \\ - \gamma_{31} - \gamma_{32}  \end{pmatrix*}, 
        \quad %%%
        \begin{pmatrix*}[r]  \beta_{42} \\ -\beta_{42} \end{pmatrix*} = 
        \begin{pmatrix*}[r]  \gamma_{42} + \gamma_{43} \\ -\gamma_{42} -\gamma_{41} \end{pmatrix*}. 
    \end{gather*}
    Finally, we note that two dynamically equivalent mass-action systems may share the same network structure. For example, if we let $(G_\mrm{c},\vv\gamma)$ and $(G_\mrm{c},\vv\mu)$ where $\gamma_{2j} = \mu_{2j}$, $\gamma_{3j} = \mu_{3j}$, $\gamma_{4j} = \mu_{4j}$, and $\mu_{12} = \mu_{14} = \mu_{13} = \frac{\gamma_{13}}{2}$, then $(G_\mrm{c},\vv\gamma)$ and $(G_\mrm{c},\vv\mu)$ are dynamically equivalent. Because $\vv\gamma \neq \vv \mu$, we say $(G_\mrm{c},\vv\gamma)$ and $(G_\mrm{c},\vv\mu)$ are two different realizations.
\end{ex}

\subsection{Complex-balanced and disguised toric dynamical systems}
\label{sec:CB}

Complex-balanced systems are a class of mass-action systems which enjoy remarkable algebraic and stability properties. Many of their dynamical properties have been known since the seminal work of Horn and Jackson~\cite{HornJackson1972}, who intended complex-balancing to be a generalization of \emph{detailed-balancing} from thermodynamics. Later, the algebraic and combinatorial structure of complex-balanced systems was studied in \cite{CraciunDickensteinShiuSturmfels2009}. Informally, complex-balancing captures the state when the in-flux balances the out-flux at every vertex (or at every complex, hence the term complex-balancing). 

\begin{defn}
\label{def:CB} 
A mass-action system $(G,\vv\kk)$ is \df{complex-balanced} if there exists a positive steady state $\xx \in \rrpp^n$ such that for every vertex $\yy_i \in V$, the following equality holds:
\eqn{ \label{eq:CB}
    \sum_{\yy_j \in V} \kk_{ij} \xx^{\yy_i} = \sum_{\yy_j \in V} \kk_{ji} \xx^{\yy_j}.
}
\end{defn}

Although a complex-balanced system is defined to be one admitting a steady state satisfying \eqref{eq:CB}, once $(G,\vv\kk)$ has one complex-balanced steady state, then \emph{all} its positive steady states also satisfy \eqref{eq:CB}~\cite[Theorem 6A]{HornJackson1972}. This justifies calling $(G,\vv\kk)$ a complex-balanced system. 

Complex-balanced systems are remarkably stable~\cite[Theorem 2.3]{YuCraciun}. If $\xx^*$ is a complex-balanced steady state, the free energy function for detailed-balanced systems in thermodynamics
\eq{ 
    V(\xx) = \sum_{i=1}^n x_i \ln (x_i - x_i^* -1) 
}
is also a Lyapunov function for the complex-balanced system on all of $\rrpp^n$. On each stoichiometric compatibility class, the unique minimum of $V$ is a complex-balanced steady state, which is also linearly stable~\cite{SiegelJohnston2008_notes, BorosMullerRegensburger2020} and conjectured to be globally stable within its stoichiometric compatibility class~\cite{Horn1974_GAC, CraciunNazarovPantea2013}. The latter statement, known as the Global Attractor Conjecture in reaction network theory, is proved only for several special cases; for example, strongly connected networks~\cite{Anderson2011,BorosHofbauer2020}, strongly endotactic networks~\cite{GopalkrishnanMillerShiu2014}, networks in $\rr^2$~\cite{CraciunNazarovPantea2013}, and networks with three-dimensional stoichiometric subspaces~\cite{Pantea2012}.

Complex-balanced systems enjoy desirable algebro-combinatorial properties, and a surprising connection to toric geometry; thus they are also called \emph{toric dynamical systems}~\cite{CraciunDickensteinShiuSturmfels2009}. In particular, their positive steady state set admits a monomial parametrization, hence toric. Moreover, the \df{toric locus} $\mc K(G)$ of a reaction network $G$ is the set of positive $\vv\kk$ for which $(G,\vv\kk)$ is complex-balanced. After a change of coordinates, this locus is defined by binomial equations~\cite[Theorem 9]{CraciunDickensteinShiuSturmfels2009}, hence again, toric. Thanks to their strong computational and combinatorial properties, toric varieties are a class of fundamental and computationally tractable objects in algebraic geometry, see for instance \cite[page 115]{michalek2021invitation}. Projective toric varieties are described by polytopes, allowing one to understand their geometry through combinatorial methods. Since toric ideals are defined by binomial equations, they are advantageous from a computer algebra viewpoint, such as when using the Macaulay2 software~\cite{M2}. 

A necessary condition for complex-balancing is weak reversibility~\cite{horn1972necessary}, so if $G$ is not weakly reversible, $\mc K(G)=\varnothing$; for example see \Cref{fig:DE-notWR} when $\kk_{25}$, $\kk_{45}  >0$. Although such an example is not complex-balanced, it is still possible that it is \emph{dynamically equivalent} to a complex-balanced system, and hence $(G,\vv\kk)$ enjoys the same dynamical properties as a complex-balanced system. The corresponding set in parameter space was defined in \cite[Definition 2.2]{BCS22}. 
\begin{defn}
\label{def:disguised-toric}
The \df{disguised toric locus} of a reaction network $G$ is the set 
\eq{ 
    \widehat{\mc K}(G) \coloneqq \{ \vv\kk \in \rrpp^E  \st (G,\vv\kk) \text{ is dynamically equivalent to a complex-balanced system}\}.
}
If $\widehat{\mc K}(G) \neq \varnothing$, we say $G$ is \df{disguised toric}. 
\end{defn}

In other words, $\vv\kk \in \widehat{\mc K}(G)$ if and only if there exist a network $G'$ and a vector of rate constants $\vv\kk' \in \rrpp^{E'}$ such that $(G',\vv\kk')$ is complex-balanced, and $(G,\vv\kk)$ is dynamically equivalent to $(G',\vv\kk')$. By definition, $\mc K(G) \subseteq \widehat{\mc K}(G)$.

In principle, not only can the network topology change when searching for a complex-balanced realization, but also the set of vertices. In practice, however, if $(G,\vv\kk)$ is disguised toric, then a complex-balanced realization $(G',\vv\kk')$ can be found using only the source vertices of $G$~\cite[Theorem 4.7]{CraciunJinYu2019}. 

Checking whether a particular numerical vector $\vv\kk$ lies in the disguised toric locus $\widehat{\mc K}(G)$ is a linear feasibility problem, see \cite{SzederkenyiHangos2011a} and references within for an algorithm. This is much simpler than determining all of $\widehat{\mc K}(G)$. An algorithm based on quantifier elimination for computing the entire disguised toric locus $\widehat{\mc K}(G)$ is available in \cite{BCS22}. The latter might be  computationally expensive in some cases. In instances when one knows $\widehat{\mc K}(G)$ for some $G$, we can ask whether $\widehat{\mc K}(G)$ can be leveraged to study the disguised toric locus of some other network. In the next section, we answer this question when the networks are related by an invertible affine transformation.

\section{Affine equivalence of networks}
\label{sec:main}

In \cite{craciun2020structure}, it was shown that the toric locus is preserved under invertible affine transformations. Here we prove that the same is true for the disguised toric locus. We first define what is meant by transforming a reaction network by such a transformation. 

Recall that an \emph{invertible affine transformation} $\mm A \colon \rr^n \to \rr^n$ is one such that there exists an invertible linear transformation $\mm M \colon \rr^n \to \rr^n$ and a vector $\vv b \in \rr^n$ such that $\mm A (\vv y)  = \mm M \vv y + \vv b$ for all $\vv y \in \rr^n$. 

\begin{defn}
\label{def:affine-trans} 
Let $G = (V,E)$ be a network in $\rr^n$, and let $\mm A \colon \rr^n \to \rr^n$ be an invertible affine transformation. Let $\mm A(V) \coloneqq \{ \mm A(\yy) \st \yy \in V\}$ and $\mm A(E) \coloneqq \{ \mm A(\yy_i) \to \mm A(\yy_j) \st \yy_i \to \yy_j \in E\}$. Then the \df{image of $G$ under $\mm A$} is the graph $\mm A(G) = (\mm A(V), \mm A(E))$. 
\end{defn}

Every invertible affine transformation $\mm A$ induces an isomorphism of abstract graphs between the network $G$ and its image $\mm A(G)$, i.e., viewing the nodes and edges abstractly, forgetting about the embedding in $\rr^n$. In this work, we assume that $\mm A(G)$ inherits the rate constants from $G$ through this graph isomorphism. More precisely, two networks $G$ and $G'$ are \df{affinely equivalent} if there exists an invertible affine transformation $\mm A$ such that $G' = \mm A(G)$. Suppose $(G,\vv\kk)$ is a mass-action system where $\kk_{ij}$ is the rate constant of $\yy_i \to \yy_j$. Then for $\mm A(G)$, we let $\kk_{ij}$ be the rate constant of the corresponding reaction $\mm A(\yy_i) \to \mm A(\yy_j)$. We say the mass-action systems $(G,\vv\kk)$ and $(G',\vv\kk)$ are \df{affinely equivalent} if $G$ and $G'$ are affinely equivalent.

\begin{rmk}
\label{rmk:collinear}
    One might ask why we consider invertible affine maps. A major motivation is that as a graph isomorphism, affine maps preserve collinear vertices, which is necessary for dynamical equivalence. Consider the network in \Cref{fig:collinear-a}, which for any choice of positive rate constants is dynamically equivalent to the reversible network $\yy_1 \RR \yy_3$. However, its image under the map $(x,y) \mapsto (y, xy)$, the network in  \Cref{fig:collinear-b}, can never be dynamically equivalent to a reversible network. As we will see through examples of projective maps, only preserving collinearity is not sufficient. Thus,  invertible affine transformations are truly the natural graph isomorphisms when studying dynamically equivalent systems and disguised toric systems.
\end{rmk}

\begin{figure}[h!]
\centering
\begin{subfigure}[b]{0.3\textwidth}\centering 
    \begin{tikzpicture}[scale=1]
        \draw [step=1, gray!50!white, thin] (0,0) grid (2.25, 2.25);
        \node at (0,1.35) {};
            \draw [->, gray] (0,0)--(2.25,0);
            \draw [->, gray] (0,0)--(0,2.25);
            \node [inner sep=2pt] (1) at (0,1) {\blue{$\bullet$}};
            \node [inner sep=2pt] (2) at (1,1) {\blue{$\bullet$}};
            \node [inner sep=2pt] (3) at (2,1) {\blue{$\bullet$}};
            
            \node[opacity=0] at (0,0) [left] {$\vv y_1$};
            \node at (0,1) [left] {$\vv y_1$};
            \node at (1,1) [below] {$\vv y_2$};
            \node at (2,1) [right] {$\vv y_3$};
            
            \draw [-{stealth}, thick, blue] (1)--(2) ;
            \draw [-{stealth}, thick, blue] (3)--(2) ;
    \end{tikzpicture}
    \caption{}
    \label{fig:collinear-a}
\end{subfigure}
\begin{subfigure}[b]{0.3\textwidth}\centering 
    \begin{tikzpicture}[scale=1]
        \draw [step=1, gray!50!white, thin] (0,0) grid (2.25, 2.25);
        \node at (0,1.35) {};
            \draw [->, gray] (0,0)--(2.25,0);
            \draw [->, gray] (0,0)--(0,2.25);
            \node [inner sep=2pt] (1) at (0,0) {\blue{$\bullet$}};
            \node [inner sep=2pt] (2) at (1,1) {\blue{$\bullet$}};
            \node [inner sep=2pt] (3) at (1,2) {\blue{$\bullet$}};
            
            \node at (0,0) [left] {$\vv y'_1$};
            \node at (1,1) [below right] {$\vv y'_2$};
            \node at (1,2) [right] {$\vv y'_3$};
            
            \draw [-{stealth}, thick, blue] (1)--(2) ;
            \draw [-{stealth}, thick, blue] (3)--(2) ;
    \end{tikzpicture}
    \caption{}
    \label{fig:collinear-b}
\end{subfigure}
\caption{(a) A network that is dynamically equivalent to the reversible pair $\yy_1 \RR \yy_3$. (b) Its image under the map $(x,y) \mapsto (y, xy)$ can never be dynamically equivalent to a reversible pair.}
\label{fig:collinear}
\end{figure}
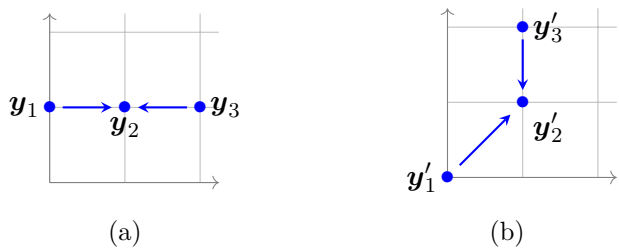

\begin{rmk}
It is not strictly necessary to restrict ourselves to the same ambient Euclidean space to define affine equivalence. If $G_1$ is a network in $\rr^{n_1}$ and $G_2$ another network in $\rr^{n_2}$, we can define $G_1$ and $G_2$ to be affinely equivalent if there is an affine equivalence as networks embedded in $\rr^{\max\{n_1,n_2\}}$ where $\rr^{n_i}$ is viewed as the first $n_i$ coordinates.
\end{rmk}

\subsection{Main result}
\label{subsec:main}

For the remainder of this section, we use the following notations. Let $\mm A (\vv y) = \mm M \vv y + \vv b$ be an invertible affine transformation, and let $(G,\vv\kk)$ be any mass-action system, with set of vertices $V$ and set of source vertices $V_s$. For each $\yy_i \in V_s$, let $\vv w_i$ denote its net reaction vector. By definition, if $\yy_i \to \yy_j \in E$, then $\kk_{ij} > 0$; if $\yy_i\to \yy_j \not\in E$,  we set $\kk_{ij} = 0$. A priori $G$ may not be weakly reversible, so it is possible that $V_s \subsetneq V$. The net reaction vector of $\yy_i \in V_s$ is 
\eqn{ \label{eq:net-dir_vect}
    \vv w_i = \sum_{\yy_j \in V} \kk_{ij} ( \yy_j -  \yy_i ),
}
where the sum is over all vertices in $V$.

\begin{lem}
\label{prop:net-dir-vect}
The net reaction vector of the vertex $\mm A(\yy_i)\in \mm A(V_s)$ in $(\mm A(G), \vv\kk)$ is $\mm M \vv w_i$, where $\vv w_i$ is the net reaction vector of the vertex $\yy_i \in V_s$ in $(G,\vv\kk)$.
\end{lem}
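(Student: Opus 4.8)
The plan is to compute the net reaction vector of $\mm A(\yy_i)$ straight from the definition \eqref{eq:net-dir_vect}, and to observe that the translation part of $\mm A$ cancels in every difference $\mm A(\yy_j) - \mm A(\yy_i)$.

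First I would record the bookkeeping. Since $\mm A$ is a bijection, it induces an isomorphism of the underlying abstract graphs $G \cong \mm A(G)$: the edge $\mm A(\yy_i) \to \mm A(\yy_j)$ lies in $\mm A(E)$ precisely when $\yy_i \to \yy_j \in E$, and $\mm A(\yy_i)$ is a source vertex of $\mm A(G)$ precisely when $\yy_i \in V_s$. By our convention $\mm A(G)$ inherits rate constants through this isomorphism, so the reaction $\mm A(\yy_i) \to \mm A(\yy_j)$ carries the same rate constant $\kk_{ij}$ as $\yy_i \to \yy_j$, with the convention $\kk_{ij} = 0$ whenever the corresponding edge is absent (consistently in both networks). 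Hence the net reaction vector of $\mm A(\yy_i)$ in $(\mm A(G), \vv\kk)$ equals $\sum_{\yy_j \in V} \kk_{ij}\,(\mm A(\yy_j) - \mm A(\yy_i))$, the sum ranging over the same index set $V$ as in \eqref{eq:net-dir_vect}.

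The one computation is then immediate: writing $\mm A(\vv y) = \mm M \vv y + \vv b$, we have $\mm A(\yy_j) - \mm A(\yy_i) = (\mm M\yy_j + \vv b) - (\mm M\yy_i + \vv b) = \mm M(\yy_j - \yy_i)$, so $\vv b$ drops out. Substituting this and pulling the linear map $\mm M$ out of the finite sum yields $\sum_{\yy_j \in V} \kk_{ij}\,\mm M(\yy_j - \yy_i) = \mm M \sum_{\yy_j \in V} \kk_{ij}(\yy_j - \yy_i) = \mm M \vv w_i$, which is the claim. There is no real obstacle here; the statement is essentially a one-line linearity computation once the index-set and rate-constant bookkeeping is in place, and in fact invertibility of $\mm M$ is not needed for this lemma (only linearity of $\mm M$ together with the cancellation of $\vv b$) — invertibility will matter only later, when the argument is run in both directions.
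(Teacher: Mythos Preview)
Your proof is correct and follows essentially the same route as the paper: compute the net reaction vector of $\mm A(\yy_i)$ directly from the definition, observe that the translation $\vv b$ cancels in each difference $\mm A(\yy_j)-\mm A(\yy_i)=\mm M(\yy_j-\yy_i)$, and pull $\mm M$ through the finite sum. The only difference is cosmetic --- you spell out the graph-isomorphism bookkeeping more explicitly and note (correctly) that invertibility of $\mm M$ is not used here.
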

\begin{proof}
    On one hand, for each $\mm A(\vv y_i)$, its net reaction vector in $(\mm A(G),\vv\kk)$ is by definition, the vector
    \eq{
        \sum_{\mm A (\yy_j) \in \mm A(V)} \kk_{ij} (\mm A(\yy_j) - \mm A (\yy_i)). 
    }
    On the other hand, for each $\yy_i$, apply $\mm M$ to the representation of the net reaction vector $\vv w_i$ in \eqref{eq:net-dir_vect}: 
    \eq{ 
        \mm M\vv w_i 
        &= \sum_{\yy_j \in V} \kk_{ij} \left(\mm M \yy_j +\vv b - \mm M \yy_i - 
            \vphantom{\sum} 
            \vv b \right)
        =  \sum_{\yy_j \in V} \kk_{ij} \left(\mm A(\yy_j)  -
            \vphantom{\sum}  
            \mm A (\yy_i)  \right) .
    }
    This proves our result that $\mm M \vv w_i$ is the net reaction vector of $\mm A(\yy_i)$ in $(\mm A(G), \vv\kk)$.
\end{proof}

The following lemma highlights an important advantage of invertible affine transformations: they preserve the property of two networks being dynamically equivalent. This is an essential property, since the ODEs are the cornerstones in understanding the dynamical behaviour of the system. Recall that the main asset of using dynamical equivalence is that well-chosen distinct networks and/or distinct rate constants may give rise to the same ODE system; hence one could choose for instance networks with a more desirable combinatorial structure (such as weak reversibility), while preserving the dynamics. An example where dynamical equivalence was leveraged can be found in \cite{BCS22}. See also the discussion in \Cref{sec:discussion}.

\begin{lem}\label{lem:DE}
Suppose $(G,\vv\kk)$ and $(G^*,\vv\alpha)$ are dynamically equivalent. Then $(\mm A(G), \vv\kk)$ and $(\mm A(G^*), \vv\alpha)$ are dynamically equivalent. 
\end{lem}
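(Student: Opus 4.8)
The plan is to reduce dynamical equivalence of the transformed systems to that of the original systems by using \Cref{prop:net-dir-vect} twice. Recall that, by \Cref{def:dynEquivalence}, $(G,\vv\kk)$ and $(G^*,\vv\alpha)$ being dynamically equivalent means precisely that for every source vertex $\yy_i \in V_s \cup V_s^*$, the net reaction vector of $\yy_i$ in $(G,\vv\kk)$ equals the net reaction vector of $\yy_i$ in $(G^*,\vv\alpha)$; call this common vector $\vv w_i$ (with the convention that it is $\vv 0$ when $\yy_i$ is not a source of the relevant network). I want to show the analogous equality of net reaction vectors for $(\mm A(G),\vv\kk)$ and $(\mm A(G^*),\vv\alpha)$ at every vertex of $\mm A(V_s) \cup \mm A(V_s^*)$.

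First I would note that since $\mm A$ is a bijection, $\mm A(V_s \cup V_s^*) = \mm A(V_s) \cup \mm A(V_s^*)$, so every source vertex of the transformed networks is of the form $\mm A(\yy_i)$ for a unique $\yy_i \in V_s \cup V_s^*$; moreover $\mm A(\yy_i)$ is a source vertex of $\mm A(G)$ if and only if $\yy_i$ is a source vertex of $G$ (and likewise for $G^*$), because $\mm A$ induces a graph isomorphism. Then I would apply \Cref{prop:net-dir-vect}: the net reaction vector of $\mm A(\yy_i)$ in $(\mm A(G),\vv\kk)$ is $\mm M \vv w_i$, and the net reaction vector of $\mm A(\yy_i)$ in $(\mm A(G^*),\vv\alpha)$ is $\mm M \vv w_i$ as well — the same vector, since the underlying net reaction vectors of $\yy_i$ in $(G,\vv\kk)$ and in $(G^*,\vv\alpha)$ agree by hypothesis. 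Hence the defining equality \eqref{eq:DE} holds for $(\mm A(G),\vv\kk)$ and $(\mm A(G^*),\vv\alpha)$ at every vertex in $\mm A(V_s)\cup\mm A(V_s^*)$, which is exactly dynamical equivalence.

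One small technical point I would make explicit: \Cref{prop:net-dir-vect} as stated concerns a single mass-action system, so I should be slightly careful that it applies verbatim to both $(G,\vv\kk)$ and $(G^*,\vv\alpha)$, and that the convention ``$\vv w_i = \vv 0$ for $\yy_i \notin V_s$'' is compatible with $\mm M \vv 0 = \vv 0$ — which it trivially is, since $\mm M$ is linear. So the cases where $\yy_i$ is a source of one network but not the other are handled uniformly. I do not expect any real obstacle here; the only thing to watch is bookkeeping of which vertex set each index ranges over, and the fact that invertibility of $\mm A$ is what guarantees the source-vertex sets of the images are exactly the images of the source-vertex sets. This is genuinely a short argument once \Cref{prop:net-dir-vect} is in hand.
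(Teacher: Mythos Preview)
Your proposal is correct and follows essentially the same approach as the paper: both arguments invoke \Cref{prop:net-dir-vect} to see that the net reaction vector of $\mm A(\yy_i)$ in each transformed system is $\mm M\vv w_i$, and conclude dynamical equivalence from the equality of these vectors. Your version is slightly more explicit about the bookkeeping (bijectivity of $\mm A$ on source-vertex sets, the $\vv w_i=\vv 0$ convention), but there is no substantive difference.
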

\begin{proof}
    By definition of dynamical equivalence, the net reaction vector of $\yy_i$ in either $(G,\vv\kk)$ or $(G^*, \vv\alpha)$ is $\vv w_i$. Therefore by \Cref{prop:net-dir-vect} the net reaction vector of $\mm A(\yy_i)$ in either $(\mm A(G),\vv\kk)$ or $(\mm A(G^*), \vv\alpha)$ is $\mm M\vv w_i$. These net reaction vectors are equal, so $(\mm A(G), \vv\kk)$ and $(\mm A(G^*), \vv\alpha)$ are dynamically equivalent.
\end{proof}

We collect some observations about coordinate-wise exponentiation and logarithms of vectors, which will be used later for proving our main result.

\begin{lem}
\label{lem:exp-calc}
For $\xx \in \rrpp^n$, $\yy \in \rr^n$, and $\mm M \in \rr^{n \times n}$ whose columns are $\vv m_i$,  define the component-wise operations 
\eq{ 
    \exp(\yy) \coloneqq \begin{pmatrix} e^{y_1} \\  \vdots \\ e^{y_n} \end{pmatrix} , 
        \quad 
    \log(\xx) \coloneqq  \begin{pmatrix}  \log x_1 \\ \vdots \\  \log x_n \end{pmatrix} , 
         \quad \text{and} \qquad 
     \xx^{\mm M} \coloneqq \begin{pmatrix} \xx^{\vv m_1} \\ \vdots \\ \xx^{\vv m_n} \end{pmatrix}. 
}
Then we have 
\begin{enumerate}[label={(\roman*)}]
    \item $\log( \xx^{\vv v}) = \vv v^\top \log \xx$,
    \item $\log( \xx^{\mm M}) = \mm M^\top \log \xx$, 
    \item $\log( \xx^{\mm M\vv v}) = \vv v^\top  \log (\xx^\mm M)$, and
    \item $\xx^{\mm M_1\mm M_2} = (\xx^{\mm M_1})^{\mm M_2}$. 
\end{enumerate}       
\end{lem}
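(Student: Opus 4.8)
The plan is to verify each of the four identities directly from the definitions of the component-wise operations, using only elementary properties of the exponential and logarithm. None of these is deep; the point is to record them cleanly so they can be cited later. I would present them in an order that lets each one bootstrap the next.

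First I would prove (i): by the multi-index convention, $\xx^{\vv v} = \prod_{k=1}^n x_k^{v_k}$, so taking logarithms and using $\log(ab) = \log a + \log b$ and $\log(x^v) = v\log x$ gives $\log(\xx^{\vv v}) = \sum_{k=1}^n v_k \log x_k = \vv v^\top \log \xx$. Then (ii) is just (i) applied row by row: the $i$th entry of $\xx^{\mm M}$ is $\xx^{\vv m_i}$, whose log is $\vv m_i^\top \log \xx$ by (i), and stacking these entries gives the vector $\mm M^\top \log\xx$ since the rows of $\mm M^\top$ are exactly the $\vv m_i^\top$. For (iii), I would apply (i) with the vector $\vv v$ replaced by $\mm M\vv v$ to get $\log(\xx^{\mm M\vv v}) = (\mm M\vv v)^\top \log\xx = \vv v^\top \mm M^\top \log\xx$, and then recognize $\mm M^\top\log\xx = \log(\xx^{\mm M})$ by (ii), yielding $\vv v^\top \log(\xx^{\mm M})$.

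Finally, for (iv) I would show the two sides agree entrywise. Writing $\mm M_2$ with columns $\vv c_1,\dots,\vv c_n$, the $i$th entry of the right-hand side $(\xx^{\mm M_1})^{\mm M_2}$ is $(\xx^{\mm M_1})^{\vv c_i}$; taking its log and using (iii) (with $\mm M = \mm M_1$, $\vv v = \vv c_i$) gives $\vv c_i^\top \log(\xx^{\mm M_1}) \stackrel{\text{(ii)}}{=} \vv c_i^\top \mm M_1^\top \log\xx = (\mm M_1 \vv c_i)^\top \log\xx$. On the other hand, the $i$th column of $\mm M_1\mm M_2$ is $\mm M_1\vv c_i$, so the $i$th entry of the left-hand side $\xx^{\mm M_1\mm M_2}$ is $\xx^{\mm M_1 \vv c_i}$, whose log is $(\mm M_1\vv c_i)^\top\log\xx$ by (i). Since the logarithm is injective on $\rrpp$ and all entries in question are positive, the two sides are equal.

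I do not anticipate a genuine obstacle here; the only thing to be careful about is bookkeeping, namely keeping straight the distinction between the columns $\vv m_i$ of $\mm M$ and the rows $\vv m_i^\top$ of $\mm M^\top$, and making sure the "stacking" of scalar identities into vector identities is done consistently (each identity (ii)–(iv) is really just the corresponding scalar identity applied in each coordinate). The positivity hypothesis $\xx\in\rrpp^n$ is what guarantees every monomial $\xx^{\vv v}$ is a well-defined positive real, so all the logarithms make sense and $\exp$, $\log$ are mutually inverse throughout.
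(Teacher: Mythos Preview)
Your proof is correct and is precisely the kind of direct verification the paper has in mind; indeed, the paper does not write out a proof at all, stating only that it is straightforward and left to the reader. One small slip in your write-up of (iv): when you compute $\log\big((\xx^{\mm M_1})^{\vv c_i}\big)$, the identity you are actually invoking is (i) applied with the positive base vector $\xx^{\mm M_1}$, not (iii); the conclusion $\vv c_i^\top \log(\xx^{\mm M_1})$ is nonetheless correct, and the rest of the argument goes through unchanged.
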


We leave the straightforward proof of \Cref{lem:exp-calc} to the reader.

\begin{prop}\label{lem:CB}
Suppose $(G^*, \vv\alpha)$ is complex-balanced. Then $(\mm A(G^*), \vv\alpha)$ is complex-balanced. 
\end{prop}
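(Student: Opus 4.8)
The plan is to transport an explicit complex-balanced steady state of $(G^*,\vv\alpha)$ to one of $(\mm A(G^*),\vv\alpha)$ via the coordinate change $\vv z \mapsto \vv z^{\mm M}$, using the identities collected in \Cref{lem:exp-calc}.

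First I would fix a complex-balanced steady state $\xx^* \in \rrpp^n$ of $(G^*,\vv\alpha)$, so that the balancing equations \eqref{eq:CB} hold at $\xx^*$ for every vertex $\yy_i$ of $G^*$. The vertices of $\mm A(G^*)$ are the points $\mm A(\yy_i) = \mm M\yy_i + \vv b$, and for any $\vv z \in \rrpp^n$ one has $\vv z^{\mm A(\yy_i)} = \vv z^{\mm M\yy_i}\,\vv z^{\vv b} = (\vv z^{\mm M})^{\yy_i}\,\vv z^{\vv b}$ by \Cref{lem:exp-calc}(iv) (reading $\yy_i$ as an $n\times 1$ matrix), where $\vv z^{\vv b}$ is a strictly positive scalar not depending on $i$. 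Substituting this into the balancing equations for $(\mm A(G^*),\vv\alpha)$ at $\vv z$ and cancelling the common factor $\vv z^{\vv b}$ shows that $\vv z$ complex-balances $(\mm A(G^*),\vv\alpha)$ if and only if the point $\vv z^{\mm M}$ complex-balances $(G^*,\vv\alpha)$.

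It then remains to exhibit $\vv z \in \rrpp^n$ with $\vv z^{\mm M} = \xx^*$. Taking component-wise logarithms and applying \Cref{lem:exp-calc}(ii) turns this into the linear system $\mm M^\top \log\vv z = \log\xx^*$, which is solvable precisely because $\mm M$, hence $\mm M^\top$, is invertible; explicitly $\vv z = \exp\big((\mm M^\top)^{-1}\log\xx^*\big) \in \rrpp^n$. Since any solution of \eqref{eq:CB} is in particular a positive steady state — regrouping $\vv F_{(\mm A(G^*),\vv\alpha)}$ by vertex makes every coefficient vanish — this $\vv z$ witnesses that $(\mm A(G^*),\vv\alpha)$ is complex-balanced.

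I do not expect a serious obstacle here: the entire content is the bookkeeping observation that the affine shift $\vv b$ contributes only the harmless overall factor $\vv z^{\vv b}$ (which cancels from every balancing equation), while the linear part $\mm M$ acts invertibly on logarithmic coordinates, so the required preimage $\vv z$ of $\xx^*$ under $\vv z\mapsto\vv z^{\mm M}$ always exists. The only place where care is needed is to keep track that \Cref{lem:exp-calc}(iv) applies with a rectangular ``matrix'' $\yy_i$, and that $\vv z^{\vv b}$ makes sense and is positive even though $\vv b$ may have negative entries.
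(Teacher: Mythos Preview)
Your proposal is correct and follows essentially the same route as the paper: both transport a complex-balanced steady state $\xx^*$ of $(G^*,\vv\alpha)$ to the point $(\xx^*)^{\mm M^{-1}}=\exp\big((\mm M^\top)^{-1}\log\xx^*\big)$ for $(\mm A(G^*),\vv\alpha)$, using \Cref{lem:exp-calc} to match monomials. The only cosmetic difference is that the paper first rewrites \eqref{eq:CB} in the ``difference'' form $\sum_j\alpha_{ij}=\sum_j\alpha_{ji}\xx^{\yy_j-\yy_i}$ (so the translation $\vv b$ disappears automatically), whereas you keep the full monomials and cancel the common scalar $\vv z^{\vv b}$ explicitly; your phrasing even yields the clean ``if and only if'' between complex-balancing of $\vv z$ for $\mm A(G^*)$ and of $\vv z^{\mm M}$ for $G^*$.
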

\begin{proof}
    By definition, $(G^*, \vv\alpha)$ being complex-balanced means there exists $\xx' \in \rrpp^n$ such that for every vertex $\yy_i \in V_s$, we have 
    \eqn{ \label{eq:flux-bal-Gcb}
        \sum_{j \neq i} \alpha_{ij}   &= \sum_{j \neq i} \alpha_{ji} (\xx')^{\yy_j - \yy_i} . 
    }
    We want to show that $(\mm A(G^*),\vv\alpha)$ is complex-balanced, i.e., we want to show that there exists a positive vector $\xx \in \rrpp^n$ such that for $i = 1,\ldots, |V_s|$, we have 
    \eqn{ \label{eq:flux-bal-AGcb}
        \sum_{j \neq i} \alpha_{ij}   &= \sum_{j \neq i} \alpha_{ji} \xx^{\mm A(\yy_j) - \mm A(\yy_i)} .
    }
    Given $\xx' \in \rrpp^n$ satisfying \eqref{eq:flux-bal-Gcb}, it suffices to show that there exists $\xx \in \rrpp^n$ such that for any $i$, $j$,  
    \eqn{\label{eq:binom-special} 
        (\xx')^{\yy_j-\yy_i} &= \xx^{\mm A(\yy_j) - \mm A(\yy_i)} ,
    }
    which is equivalent to $(\xx')^{\mm M^{-1}(\mm A(\yy_j) - \mm A(\yy_i))} = \xx^{\mm A(\yy_j) - \mm A(\yy_i)}$, or by \Cref{lem:exp-calc}, 
    \eq{ 
        (\mm A(\yy_j) - \mm A(\yy_i))^\top \log (\xx')^{\mm M^{-1}} = ( \mm A(\yy_j) - \mm A (\yy_i))^\top  \log \xx.
    }
    Choose $\xx = (\xx')^{\mm M^{-1}}$, which is a vector in $\rrpp^n$, defined independently of the vertex indices $i$, $j$. Then \eqref{eq:flux-bal-AGcb} follows from \eqref{eq:flux-bal-Gcb}.
\end{proof}

We now arrive at our main result. Our main motivation is to use this result to simplify the task of computing the disguised toric locus of a (more) complicated network. In \Cref{ex:doubletargets}, we demonstrate this with an affine image of the network in \Cref{fig:intro-mas}.

\begin{thm}\label{thm:main}
For any network $G$ in $\rr^n$ and any invertible affine transformation $\mm A$ of $\rr^n$,
\eq{
    \widehat{\mc K}(\mm A(G)) =  \widehat{\mc K}(G).
}
Specifically, if $(G,\vv\kk)$ is dynamically equivalent to a complex-balanced system $(G^*,\vv\alpha)$, then $(\mm A(G),\vv\kk)$ is dynamically equivalent to $(\mm A(G^*), \vv\alpha)$, which is complex-balanced.
\end{thm}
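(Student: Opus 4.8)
The plan is to assemble the theorem directly from the lemmas already in hand, observing that its two assertions are essentially one. The ``specifically'' clause is exactly the two-step chain obtained by applying \Cref{lem:DE} and then \Cref{lem:CB}, while the set equality $\widehat{\mc K}(\mm A(G)) = \widehat{\mc K}(G)$ follows by running that chain in both directions.

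First I would prove the inclusion $\widehat{\mc K}(G) \subseteq \widehat{\mc K}(\mm A(G))$. Take $\vv\kk \in \widehat{\mc K}(G)$. By \Cref{def:disguised-toric} there are a network $G^*$ and rate constants $\vv\alpha \in \rrpp^{E^*}$ such that $(G^*,\vv\alpha)$ is complex-balanced and dynamically equivalent to $(G,\vv\kk)$. Then \Cref{lem:DE} gives that $(\mm A(G),\vv\kk)$ is dynamically equivalent to $(\mm A(G^*),\vv\alpha)$, and \Cref{lem:CB} gives that $(\mm A(G^*),\vv\alpha)$ is complex-balanced. Hence $\vv\kk \in \widehat{\mc K}(\mm A(G))$. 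This argument also establishes the ``specifically'' sentence verbatim, with $(G^*,\vv\alpha)$ in its stated role.

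For the reverse inclusion I would use that $\mm A^{-1}$ is again an invertible affine transformation, with linear part $\mm M^{-1}$ and translation $-\mm M^{-1}\vv b$, and that $\mm A^{-1}(\mm A(G)) = G$ as embedded graphs. Applying the inclusion just proved to the network $\mm A(G)$ together with the transformation $\mm A^{-1}$ yields $\widehat{\mc K}(\mm A(G)) \subseteq \widehat{\mc K}\bigl(\mm A^{-1}(\mm A(G))\bigr) = \widehat{\mc K}(G)$. Combining the two inclusions gives the stated equality.

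I do not expect a genuine obstacle: the mathematical content of the theorem has already been distilled into \Cref{lem:DE} and \Cref{lem:CB}, so what remains is bookkeeping. The one point deserving care is the identification of rate constants under the graph isomorphisms --- one should check that ``the same $\vv\kk$'' is meaningful along $G \to \mm A(G) \to \mm A^{-1}(\mm A(G)) = G$ --- but this is immediate from the convention fixed after \Cref{def:affine-trans} that $\mm A(G)$ inherits the value $\kk_{ij}$ on the edge $\mm A(\yy_i) \to \mm A(\yy_j)$, so composing with the inverse isomorphism returns the original labelling.
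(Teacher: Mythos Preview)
Your proposal is correct and follows essentially the same approach as the paper: the paper's proof likewise reduces the theorem to \Cref{lem:DE} and \Cref{lem:CB}, invoking $\mm A$ and $\mm A^{-1}$ to obtain both directions. The only cosmetic difference is that the paper phrases the argument as a pair of ``if and only if'' statements rather than as two separate inclusions.
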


\begin{proof}
    Given a mass-action system $(G^*,\vv\alpha)$, applying \Cref{lem:DE} with $\mm A$ and $\mm A^{-1}$, we find that $(G,\vv\kk)$ is dynamically equivalent to  $(G^*,\vv\alpha)$ if and only if $(\mm A(G),\vv\kk)$ is dynamically equivalent to  $(\mm A(G^*),\vv\alpha)$. \Cref{lem:CB} shows $(G^*,\vv\alpha)$ is complex-balanced if and only if $(\mm A(G^*),\vv\alpha)$ is complex-balanced. As a result, $\widehat{\mc K}(\mm A(G)) =  \widehat{\mc K}(G)$. 
\end{proof}

Unlike the toric locus, computing the disguised toric locus is in general a difficult problem. Even for the relatively simple network shown in \Cref{fig:projective-counter-example-a}, the disguised toric locus is complicated:~as shown in~\cite[Section 4]{BCS22}, the answer breaks up into 4 cases; for 3 of these cases, the disguised toric locus is the whole parameter space and in the remaining case, it is defined by a quadratic inequality. The disguised toric locus here was computed using an algorithm based on real quantifier elimination~\cite{BCS22}.  

Let us see an example where  \Cref{thm:main} facilitates the computation of the disguised toric locus of one network, by considering an affinely equivalent network, whose disguised toric locus was already computed in~\cite[Section 6]{BCS22} and \cite[Example 5.2]{CraciunJinYu2019}.

\begin{figure}[h!]
\centering
\hfill 
\begin{subfigure}[b]{0.35\textwidth}
    \begin{tikzpicture}[scale=0.85]
        \node at (0,-0.75) {};
        \draw [step=1, gray, very thin] (0,0) grid (5.5,3.5);
        \draw [ ->, gray] (0,0)--(5.5,0);
        \draw [ ->, gray] (0,0)--(0,3.5);
        
        \node [inner sep=2pt] (c1) at (11/6,4/3) {\blue{$\bullet$}};
        \node[inner sep=2pt]  (c2) at (3.167,5/3) {\blue{$\bullet$}};
        
        \node [inner sep=2pt] (1) at (0,0) {\blue{$\bullet$}};
        \node [inner sep=2pt](2) at (4,1) {\blue{$\bullet$}};
        \node [inner sep=2pt](3) at (5,3) {\blue{$\bullet$}};
        \node [inner sep=2pt](4) at (1,2) {\blue{$\bullet$}};
        
        \node at (0,0) [left] {$\yy_1$};
        \node at (1,2) [left] {$\yy_4$};
        \node at (4,1) [right] {$\yy_2$};
        \node at (5,3) [right] {$\yy_3$};
        \node at (11/6,4/3) [below right] {$\yy_5$};
        \node at (3.167,5/3) [left] {$\yy_6$};
        
        \draw [-{stealth}, thick, blue, transform canvas={xshift=0ex, yshift=0ex}] (1)--(c1) ;
        \draw [-{stealth}, thick, blue, transform canvas={xshift=-0ex, yshift=-0ex}] (4)--(c1) ;
                
        \draw [-{stealth}, thick, blue, transform canvas={xshift=0ex, yshift=0ex}] (2)--(c2) ;
        \draw [-{stealth}, thick, blue, transform canvas={xshift=-0ex, yshift=-0ex}] (3)--(c2) ;
	\end{tikzpicture}
	\caption{} 
	\label{fig:doubletargets-affine}
    \end{subfigure}
\hfill %%%
    \begin{subfigure}[b]{0.3\textwidth}
        \includegraphics[width=1.85in]{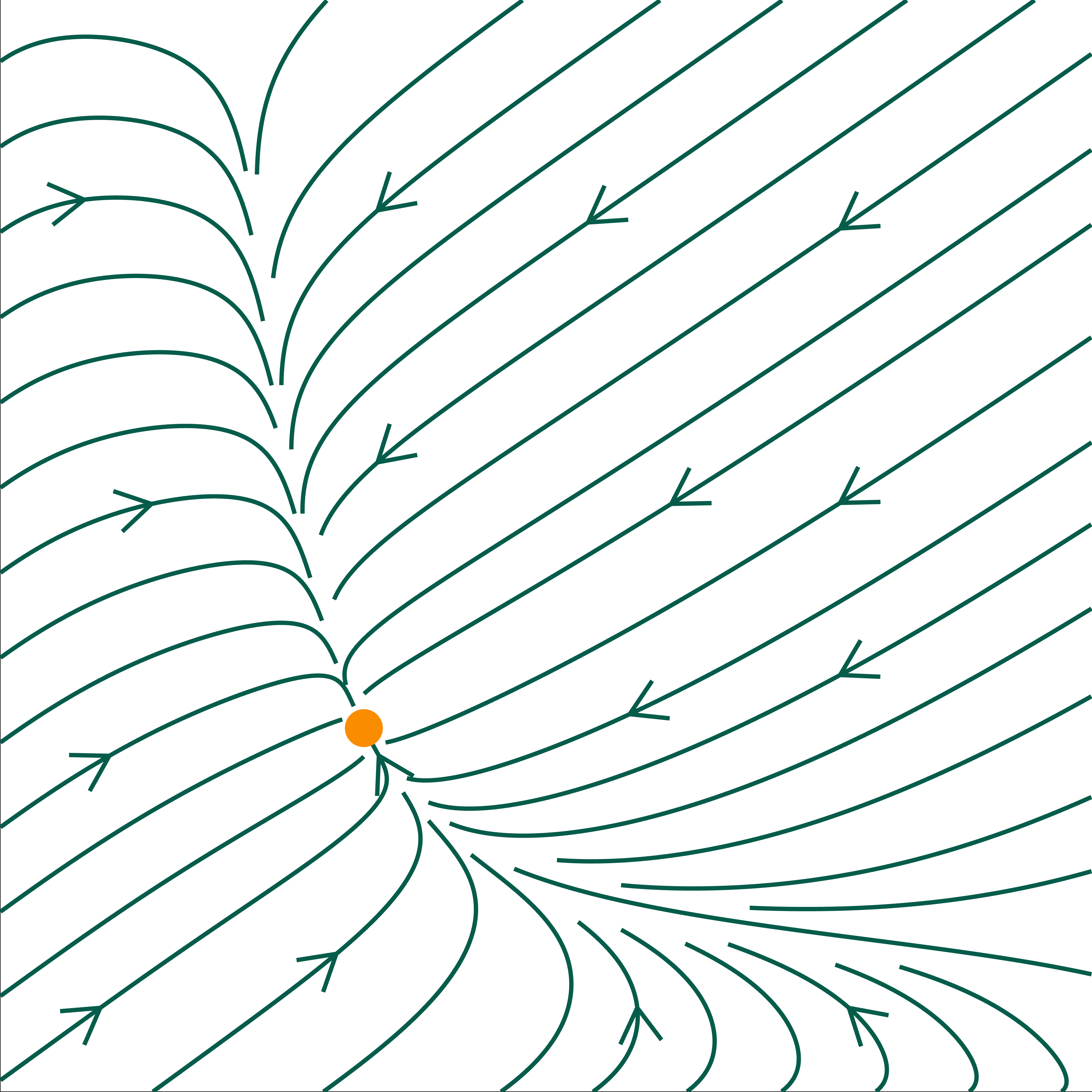}
    \caption{} 
	\label{fig:doubletargets-affine-trajectory}
    \end{subfigure}
\hfill %%%%    
    \begin{subfigure}[b]{0.3\textwidth}
        \includegraphics[width=1.85in]{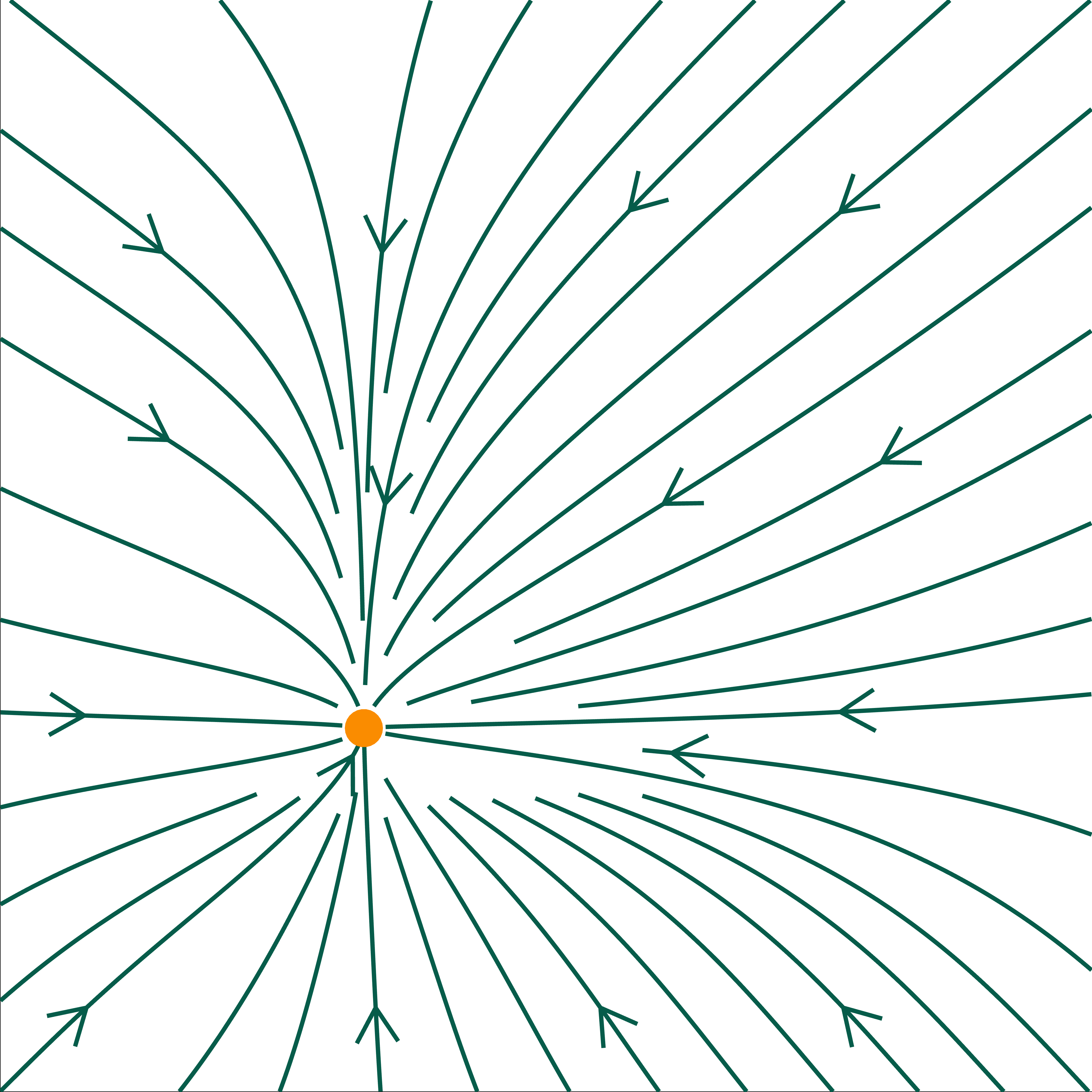}
    \caption{} 
	\label{fig:doubletargets-original-trajectory}
    \end{subfigure}
\hfill 
\caption{(a) The image of the network in \Cref{fig:intro-mas} under an invertible affine transformation (see \Cref{ex:doubletargets}), and (b) its phase portrait. For comparison, (c) the phase portrait of the network in \Cref{fig:intro-mas}. The rate constants are taken to be $\kk_1 = \cdots = \kk_4 = 1$ for simplicity.}
\label{fig:doubletargets-affine-tot}
\end{figure}

\begin{ex}
\label{ex:doubletargets}    
Consider the network $G$ in \Cref{fig:intro-mas}, whose image under the invertible linear transformation 
\eq{ 
    \mm M = \begin{pmatrix}
        4/3 & 1/2 \\ 
        1/3 & 1
    \end{pmatrix}
}
is shown in \Cref{fig:doubletargets-affine}. In other words, the network in \Cref{fig:doubletargets-affine} is affinely equivalent to the one in \Cref{fig:intro-mas}, whose disguised toric locus is 
\eqn{\label{eq:ex-Khat} 
    \widehat{\mc K}(G) = \left\{ \vv\kk \in \rrpp^4  \st \frac{1}{25} \leq \frac{\kk_1\kk_3}{\kk_2\kk_4} \leq 25 \right\} ,
}
where $\kk_i$ is the rate constant of the edge originating from $\vv y_i$, see~\cite[Section 6]{BCS22} and \cite[Example 5.2]{CraciunJinYu2019}. Note that the disguised toric locus is in general \emph{not} toric, even though it contains the toric locus. In this example, the toric locus is empty because $G$ is not weakly reversible. By \Cref{thm:main}, \eqref{eq:ex-Khat}  is also the disguised toric locus of the network $\mm M(G)$ in \Cref{fig:doubletargets-affine}. Indeed, we are not restricted to the network in \Cref{fig:doubletargets-affine}; instead the image of $G$ in \Cref{fig:intro-mas} under any invertible affine transformation (in any dimensions $n \geq 2$) would have the exact same disguised toric locus.

Consider the phase portraits of the dynamics generated by the network before (\Cref{fig:doubletargets-original-trajectory}) and after (\Cref{fig:doubletargets-affine-trajectory}) the affine transformation is applied, with $\kk_1 = \cdots = \kk_4 = 1$ for simplicity. Both mass-action systems are disguised toric, and for this choice of rate constants, both share the positive steady state $(1,1)^\top$. Both systems' dynamics are qualitatively the same, with the unique steady state being globally attracting~\cite{BorosHofbauer2020, Anderson2011}, since the systems are dynamically equivalent to complex-balanced systems with one strongly connected component.
\end{ex}

\begin{rmk}
A related though more restrictive notion of comparing mass-action systems was explored in  \cite{JohnstonSiegel2011}. The authors defined two mass-action systems to be \emph{linearly conjugate} if the trajectories of one are the images of the other under a linear map. When the linear map is the identity map, linear conjugacy captures the notion of dynamical equivalence. More generally, linear conjugacy is limited to scaling of individual axes and reordering coordinates~\cite[Lemma 3.1]{JohnstonSiegel2011}. Clearly, linearly conjugate mass-action systems are affinely equivalent, while the converse is not true. In \Cref{sec:discussion}, we see that affinely equivalent mass-action systems are generally \emph{not} topologically conjugate; in other words, it is not necessary for affinely equivalent mass-action systems to share the same qualitative dynamics (unless it is complex-balancing). 
\end{rmk}

\subsection{Extension to detailed-balanced systems}
\label{sec:DB-extension}

It is not difficult to show a parallel result where complex-balancing is replaced by \emph{detailed-balancing}. In particular, it suffices to show a result analogous to \Cref{lem:CB}. 
We first define detailed-balancing.

\begin{defn}
\label{def:DB} 
    A mass-action system $(G,\vv\kk)$ is \df{detailed-balanced} if there exists a positive steady state $\xx \in \rrpp^n$ such that for every reaction $\yy_i \to \yy_j \in E$, we have 
\begin{equation} \label{eq:DB}
    \kk_{ij} \xx^{\yy_i} = \kk_{ji} \xx^{\yy_j}.
\end{equation}
\end{defn}

A necessary condition for detailed-balancing is \emph{reversibility} of the network, i.e., if $\yy_i \to \yy_j \in E$, then $\yy_j \to \yy_i \in E$. Otherwise \eqref{eq:DB} cannot be satisfied.

\begin{thm}\label{thm:DB}
Suppose $(G,\vv\kk)$ is dynamically equivalent to a detailed-balanced system $(G^*, \vv\alpha)$. Let $\mm A$ be any invertible affine transformation. Then $(\mm A(G),\vv\kk)$ is dynamically equivalent to $(\mm A(G^*), \vv\alpha)$, which is detailed-balanced. 
\end{thm}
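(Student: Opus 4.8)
The plan is to run exactly the argument used for \Cref{thm:main}, with \Cref{lem:CB} replaced by its detailed-balanced analogue. The dynamical-equivalence half requires nothing new: since $(G,\vv\kk)$ and $(G^*,\vv\alpha)$ are dynamically equivalent by hypothesis, applying \Cref{lem:DE} to $\mm A$ immediately yields that $(\mm A(G),\vv\kk)$ and $(\mm A(G^*),\vv\alpha)$ are dynamically equivalent. So the only genuinely new ingredient is the claim that if $(G^*,\vv\alpha)$ is detailed-balanced then so is $(\mm A(G^*),\vv\alpha)$, and I would isolate this as a lemma proved in direct imitation of \Cref{lem:CB}.

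For that claim, detailed-balancing of $(G^*,\vv\alpha)$ furnishes a vector $\xx' \in \rrpp^n$ with $\alpha_{ij}(\xx')^{\yy_i} = \alpha_{ji}(\xx')^{\yy_j}$ for every reaction $\yy_i \to \yy_j \in E^*$, equivalently $\alpha_{ij} = \alpha_{ji}(\xx')^{\yy_j - \yy_i}$. Because $\mm A$ induces an isomorphism of abstract graphs that carries the rate constants along (see \Cref{def:affine-trans} and the discussion after it), $\mm A(\yy_i) \to \mm A(\yy_j)$ is a reaction of $\mm A(G^*)$ with rate constant $\alpha_{ij}$ precisely when $\yy_i \to \yy_j$ is a reaction of $G^*$, and the same holds for the reversed reactions; in particular reversibility (the necessary condition for detailed-balancing) is preserved. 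It therefore suffices to produce $\xx \in \rrpp^n$ with $\alpha_{ij} = \alpha_{ji}\,\xx^{\mm A(\yy_j) - \mm A(\yy_i)}$ for all reactions $\yy_i \to \yy_j \in E^*$. Using $\mm A(\yy_j) - \mm A(\yy_i) = \mm M(\yy_j - \yy_i)$, I would set $\xx = (\xx')^{\mm M^{-1}}$ (a vector in $\rrpp^n$, chosen independently of $i,j$) and invoke \Cref{lem:exp-calc} to get $\xx^{\mm A(\yy_j) - \mm A(\yy_i)} = \bigl((\xx')^{\mm M^{-1}}\bigr)^{\mm M(\yy_j-\yy_i)} = (\xx')^{\yy_j - \yy_i}$, so the detailed-balance equations for $(\mm A(G^*),\vv\alpha)$ at $\xx$ follow from those for $(G^*,\vv\alpha)$ at $\xx'$.

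Combining the two pieces gives the theorem: $(\mm A(G),\vv\kk)$ is dynamically equivalent to $(\mm A(G^*),\vv\alpha)$, and the latter is detailed-balanced. I do not expect a real obstacle here; the argument is structurally identical to that of \Cref{thm:main}, and the only point requiring a moment's care is that detailed-balancing is an edge-wise rather than vertex-wise condition, so one must check that the graph isomorphism underlying $\mm A$ matches reverse edges and their rate constants — which is precisely what the definition of affine equivalence guarantees. I would also remark in passing that the same reasoning yields a ``disguised detailed-balanced locus'' version, namely that the set of $\vv\kk$ for which $(G,\vv\kk)$ is dynamically equivalent to a detailed-balanced system is preserved by invertible affine transformations, in exact parallel with \Cref{thm:main}.
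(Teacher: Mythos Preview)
Your proposal is correct and matches the paper's proof essentially line for line: both invoke \Cref{lem:DE} for the dynamical-equivalence half, then set $\xx = (\xx')^{\mm M^{-1}}$ and use \Cref{lem:exp-calc} together with $\mm A(\yy_j)-\mm A(\yy_i)=\mm M(\yy_j-\yy_i)$ to transfer the edge-wise detailed-balance identities from $(G^*,\vv\alpha)$ to $(\mm A(G^*),\vv\alpha)$. Your extra remarks about reversibility being preserved and about a ``disguised detailed-balanced locus'' are correct elaborations that the paper does not spell out.
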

\begin{proof}
    By \Cref{prop:net-dir-vect}, the net reaction vector from $\mm A(\yy_i)$ in either $(\mm A(G), \vv\kk)$ or $(\mm A(G^*), \vv\alpha)$ is $\mm M \vv w_i$, hence by \Cref{lem:DE} $(\mm A(G),\vv\kk)$ and $(\mm A(G^*),\vv\alpha)$ are dynamically equivalent. It suffices to prove that $(\mm A(G^*), \vv\alpha)$ is detailed-balanced provided $(G^*,\vv\alpha)$ is detailed-balanced. 
    
    Suppose $\xx' \in \rrpp^n$ is a detailed-balanced steady state, so for any $i$, $j$ with $\alpha_{ij}$, $\alpha_{ji}  > 0$, we have $\alpha_{ij} (\xx')^{\yy_i} = \alpha_{ji} (\xx')^{\yy_j}$.
    As in the proof of Proposition \ref{lem:CB}, let $\xx = (\xx')^{\mm M^{-1}}$. By \Cref{lem:exp-calc},
    \eq{
        \xx^{\mm A(\yy_j)-\mm A(\yy_i)}=(\xx')^{\mm M^{-1}(\mm A(\yy_j)-\mm A(\yy_i))}=(\xx')^{\yy_j-\yy_i}.
    }
    In other words, $\alpha_{ij} \xx^{\mm A(\yy_i)} = \alpha_{ji} \xx^{\mm A(\yy_j)}$ for any $i \neq j$, so $\xx$ is a detailed-balanced steady state of $(\mm A(G^*),\vv\alpha)$.
\end{proof}

\section{Discussion}
\label{sec:discussion}

In this section, we explore two questions: how the dynamics of affinely equivalent mass-action systems are related, and whether \Cref{thm:main} can be extended beyond invertible affine transformations.  Towards the first question, we show that there is a canonical bijection between the sets of positive steady states of affinely equivalent mass-action systems. However, in general, local stability, capacity for multistationarity, and limit cycles are not preserved. Towards the latter question, at a minimum we require that the transformation preserve collinear vertices  per \Cref{rmk:collinear}. Invertible projective transformations both  generalize invertible affine maps and preserve collinearity. However, we provide examples of projective transformations that preserve neither dynamical equivalence nor complex-balancing.

We begin with some observations about the results from the previous section. 

First, invertible affine transformations preserve dynamical equivalence, see \Cref{lem:DE}. Recall that dynamical equivalence means that the network and rate constants may vary, while giving rise to the same ODE system. It is the system of ODEs that represents the keystone of our study, because it models the dynamical behaviour. Therefore, preservation of dynamical equivalence is an important asset of invertible affine transformations.

Second, we saw in \Cref{thm:main,thm:DB} that complex-balancing and  detailed-balancing are preserved under invertible affine transformations. The proofs of \Cref{thm:main,thm:DB} involve solving the equation
\eq{ 
    \mm \Delta \log \xx' = \mm \Delta \mm M ^\top \log \xx , 
}
where $\mm M$ is the derivative of $\mm A$ and $\mm \Delta$ is a matrix whose rows are $\yy_j - \yy_i$ with $i \neq j$ where the pairs $(i,j)$ are ordered in some manner, e.g., lexicographical order.

The set of complex-balancing steady states has a simple form. We let $\circ$ and $\exp({}\cdot{})$ denote component-wise multiplication and exponentiation respectively, i.e., 
\eq{ 
    \xx \circ \yy = (x_1y_1, x_2y_n, \ldots, x_ny_n)^\top 
    \quad \text{and} \quad 
    \exp(\xx) = (e^{x_1}, e^{x_2}, \ldots, e^{x_n})^\top. 
} 
If $S \subseteq \rr^n$, then $\exp(S) \coloneqq \{ \exp(\xx) \st  \xx \in S\}$. Then supposing that $\xx'$ is a steady state of a complex-balanced system $(G^*,\vv\alpha)$, the set of complex-balanced steady states for $(G^*,\vv\alpha)$ is precisely $\xx' \circ \exp(S^\perp)$~\cite{HornJackson1972, Feinberg1987, YuCraciun}, where $S$ is the stoichiometric subspace of $G^*$. In the proof of \Cref{lem:CB} we showed that $\xx = (\xx')^{\mm M^{-1}}$ is a complex-balanced steady state of $(\mm A(G^*),\vv\alpha)$, where  $(\xx')^{\mm M^{-1}}$ is a vector whose $i$th component is $\xx'$ exponentiated by the $i$th column of $\mm M^{-1}$, as introduced in \Cref{lem:exp-calc}. Put another way, if $\xx$ is a complex-balanced steady state for $(\mm A(G),\vv\alpha)$, then $\xx^{\mm M}$ is complex-balanced for $(G,\vv\kk)$, and $\xx^{\mm M} \in \xx' \circ \exp(S^\perp)$. 

We now show that there is a canonical bijection between the sets of positive steady states of arbitrary affinely equivalent mass-action systems, which may not be complex-balanced.

\begin{prop}
\label{prop:ss}
Let $\mm A$ be an invertible affine transformation, where $\mm A (\yy) = \mm M \yy + \vv b$. If $\xx$ is a positive steady state of $(\mm A(G),\vv\kk)$, then $\xx^{\mm M}$ is a positive steady state of  $(G,\vv\kk)$. The map $\xx \mapsto \xx^{\mm M}$ is a bijection between the set of positive steady states of $(\mm A(G),\vv\kk)$ and that of $(G,\vv\kk)$.
\end{prop}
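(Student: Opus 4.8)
The plan is to rewrite the positive steady-state condition in terms of net reaction vectors, and then exploit the multiplicativity of generalized monomials together with \Cref{prop:net-dir-vect} and \Cref{lem:exp-calc}. Recall that $\xx^* \in \rrpp^n$ is a positive steady state of $(G,\vv\kk)$ precisely when $\vv F_{(G,\vv\kk)}(\xx^*) = \vv 0$; regrouping the sum by source vertex, this is equivalent to $\sum_{\yy_i \in V_s} (\xx^*)^{\yy_i}\, \vv w_i = \vv 0$, where $\vv w_i$ is the net reaction vector of $\yy_i$. Since the source vertices of $\mm A(G)$ are exactly $\mm A(V_s)$, and by \Cref{prop:net-dir-vect} the net reaction vector of $\mm A(\yy_i)$ in $(\mm A(G),\vv\kk)$ is $\mm M\vv w_i$, the same regrouping shows that $\xx \in \rrpp^n$ is a positive steady state of $(\mm A(G),\vv\kk)$ if and only if $\sum_{\yy_i \in V_s} \xx^{\mm A(\yy_i)}\, \mm M\vv w_i = \vv 0$.

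The key computation is then: for each source vertex, $\xx^{\mm A(\yy_i)} = \xx^{\mm M\yy_i + \vv b} = \xx^{\vv b}\cdot \xx^{\mm M\yy_i} = \xx^{\vv b}\cdot (\xx^{\mm M})^{\yy_i}$, where the last identity $\xx^{\mm M\yy_i} = (\xx^{\mm M})^{\yy_i}$ follows from \Cref{lem:exp-calc} (taking logarithms, $\log \xx^{\mm M\yy_i} = \yy_i^\top\mm M^\top \log\xx = \yy_i^\top \log(\xx^{\mm M}) = \log((\xx^{\mm M})^{\yy_i})$). Consequently,
\[
    \sum_{\yy_i \in V_s} \xx^{\mm A(\yy_i)}\, \mm M\vv w_i
    = \xx^{\vv b}\,\mm M\!\left( \sum_{\yy_i \in V_s} (\xx^{\mm M})^{\yy_i}\, \vv w_i \right).
\]
Because $\xx^{\vv b}$ is a positive scalar and $\mm M$ is invertible, the left-hand side equals $\vv 0$ if and only if $\sum_{\yy_i \in V_s} (\xx^{\mm M})^{\yy_i}\, \vv w_i = \vv 0$, that is, if and only if $\xx^{\mm M}$ is a positive steady state of $(G,\vv\kk)$; note that $\xx^{\mm M} \in \rrpp^n$ whenever $\xx \in \rrpp^n$. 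This proves the first claim.

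For the bijection, I would observe that $\mm A^{-1}$ is again an invertible affine transformation, with linear part $\mm M^{-1}$ and $\mm A^{-1}(\mm A(G)) = G$ as embedded graphs (with matching rate constants). Applying the argument above to $\mm A^{-1}$ shows that $\vv z \mapsto \vv z^{\mm M^{-1}}$ sends positive steady states of $(G,\vv\kk)$ to positive steady states of $(\mm A(G),\vv\kk)$. By \Cref{lem:exp-calc}(iv), $(\xx^{\mm M})^{\mm M^{-1}} = \xx^{\Id} = \xx$ and $(\vv z^{\mm M^{-1}})^{\mm M} = \vv z$, so $\xx \mapsto \xx^{\mm M}$ and $\vv z \mapsto \vv z^{\mm M^{-1}}$ are mutually inverse, hence both are bijections between the two steady-state sets. (Alternatively, bijectivity of $\xx\mapsto\xx^{\mm M}$ on $\rrpp^n$ is clear in logarithmic coordinates, where it becomes the linear isomorphism $\mm M^\top$.)

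I do not expect a genuine obstacle: once the steady-state equation is phrased via net reaction vectors, the statement is essentially bookkeeping. The only points needing care are (a) the translation term $\vv b$, which disappears because $\xx^{\vv b}$ is a nonzero scalar common to every summand, and (b) invoking \Cref{prop:net-dir-vect} to track how source vertices and net reaction vectors transform under $\mm A$, and making sure the exponentiation identities of \Cref{lem:exp-calc} are used with the correct matrix shapes.
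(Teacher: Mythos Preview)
Your argument is correct and follows essentially the same route as the paper: write the right-hand side of $(\mm A(G),\vv\kk)$, factor out the positive scalar $\xx^{\vv b}$ and the invertible matrix $\mm M$, and use the identity $\xx^{\mm M\yy_i}=(\xx^{\mm M})^{\yy_i}$ from \Cref{lem:exp-calc} to reduce to the steady-state condition for $(G,\vv\kk)$ at $\xx^{\mm M}$. The only difference is packaging---the paper works with the stoichiometric matrix $\mm\Gamma$ and source matrix $\mm Y_s$ directly rather than regrouping by source and invoking \Cref{prop:net-dir-vect}---and your bijection argument via $\mm A^{-1}$ is spelled out a bit more explicitly than in the paper.
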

\begin{proof}
Order the reactions from $1$ to $R$ with $\yy_{i_k}\to\yy_{j_k}$ the $k$th reaction and $\mm Y_s \in \rr^{n\times R}$ the matrix whose $k$th column is the source vertex $\yy_{i_k}$. Let $\mm D = \diag(\kk_1,\ldots, \kk_R)$, so that $\mm D  \xx^{\mm Y_s}$ is the vector whose components are the fluxes $\kk_{k} \xx^{\yy_{i_k}}$ of the reactions. Let $\mm \Gamma \in \rr^{n \times R}$ be the stoichiometric matrix, whose $k$th column is $\yy_{j_k}-\yy_{i_k}$. With this notation,  $(G,\vv\kk)$ is associated to the system of ODEs
\eqn{\label{eq:ss-original_mas} 
    \dot{\xx} &= \mm \Gamma \mm D  \xx^{\mm Y_s}, 
} 
while its image $\mm A(G)$ under $\mm A$ is associated to
\eqn{\label{eq:ss-affine_mas} 
    \dot{\xx} &= \mm M \mm \Gamma \mm D  \xx^{\mm M \mm Y_s + \vv b}
    = (\mm M \mm \Gamma \mm D  \xx^{\mm M \mm Y_s}) \cdot \xx^{\vv b}.
}
Because $\mm M$ is invertible and $\xx^{\vv b} \in \rrpp$ for any $\xx \in \rrpp^n$, we see that $\xx$ is a positive steady state of \eqref{eq:ss-affine_mas} if and only if $\vv 0 = \mm \Gamma \mm D \xx^{\mm M \mm Y_s}$. By \Cref{lem:exp-calc}, 
\eq{    
    \xx^{\mm M \mm Y_s} =  (\xx^{\mm M})^{\mm Y_s}, 
}
so $\xx^{\mm M}$ is a positive steady state of \eqref{eq:ss-original_mas}. The map $\xx \mapsto \xx^{\mm M}$ is a bijection between the sets of positive steady states of \eqref{eq:ss-original_mas} and \eqref{eq:ss-affine_mas}.
\end{proof}

\Cref{prop:ss} does \emph{not} imply that asymptotic stability or multistationarity is preserved under affine transformations, as the following example demonstrates. \Cref{ex:glycolysis} shows that limit cycles are also in general not preserved under affine transformations. 

\begin{figure}[h!tb]
\centering\setlength\tabcolsep{0.15cm}
\begin{tabular}{ccc}
  \subcaptionbox{\label{fig:MS-network}}{
        \begin{tikzpicture}[scale=1]
        \draw [step=1, gray, very thin] (0,0) grid (3.5,1.65);
        \draw [ ->, gray] (0,0)--(3.5,0);
        \draw [ ->, gray] (0,0)--(0,1.75);
        
        \node [inner sep=2pt] (x) at (1,0) {\blue{$\bullet$}};
        \node [inner sep=2pt] (y) at (0,1) {\blue{$\bullet$}};
        \node [inner sep=2pt] (2xy) at (2,1) {\blue{$\bullet$}};
        \node [inner sep=2pt] (3x) at (3,0) {\blue{$\bullet$}};
        
        \draw [-{stealth}, thick, blue, transform canvas={xshift=0ex, yshift=0ex}] (x)--(y) ;
        \draw [-{stealth}, thick, blue, transform canvas={xshift=0ex, yshift=0ex}] (2xy)--(3x) ;
        \end{tikzpicture}
        }
    \\
    \subcaptionbox{\label{fig:MS-network-affine}}{
        \begin{tikzpicture}[scale=1]
        \node at (0,2.) {}; 
        \draw [step=1, gray, very thin] (0,0) grid (3.5,1.65);
        \draw [ ->, gray] (0,0)--(3.5,0);
        \draw [ ->, gray] (0,0)--(0,1.75);
        
        \node [inner sep=2pt] (x) at (1,1) {\blue{$\bullet$}};
        \node [inner sep=2pt] (y) at (1,0) {\blue{$\bullet$}};
        \node [inner sep=2pt] (2xy) at (3,0) {\blue{$\bullet$}};
        \node [inner sep=2pt] (3x) at (3,1) {\blue{$\bullet$}};
        
        \draw [-{stealth}, thick, blue, transform canvas={xshift=0ex, yshift=0ex}] (x)--(y) ;
        \draw [-{stealth}, thick, blue, transform canvas={xshift=0ex, yshift=0ex}] (2xy)--(3x) ;
        \end{tikzpicture}
        }    
    &\multirow[b]{-1.8}{*}[1.52in]{\subcaptionbox{\label{fig:MS-traj}}{
        \includegraphics[width=1.85in]{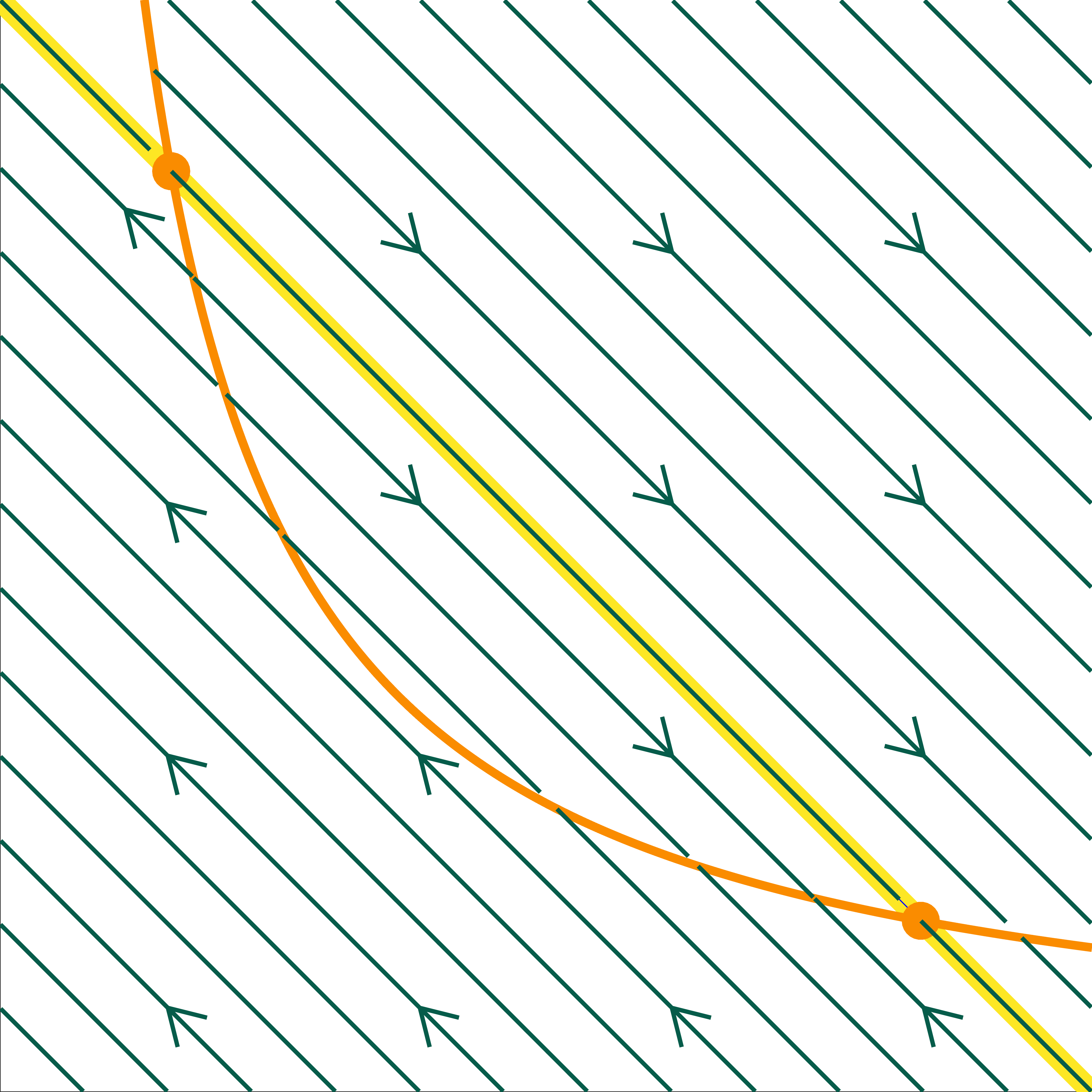}
    }}
    & \multirow[b]{-1.8}{*}[1.52in]{\subcaptionbox{\label{fig:MS-aff-traj}}{
        \includegraphics[width=1.85in]{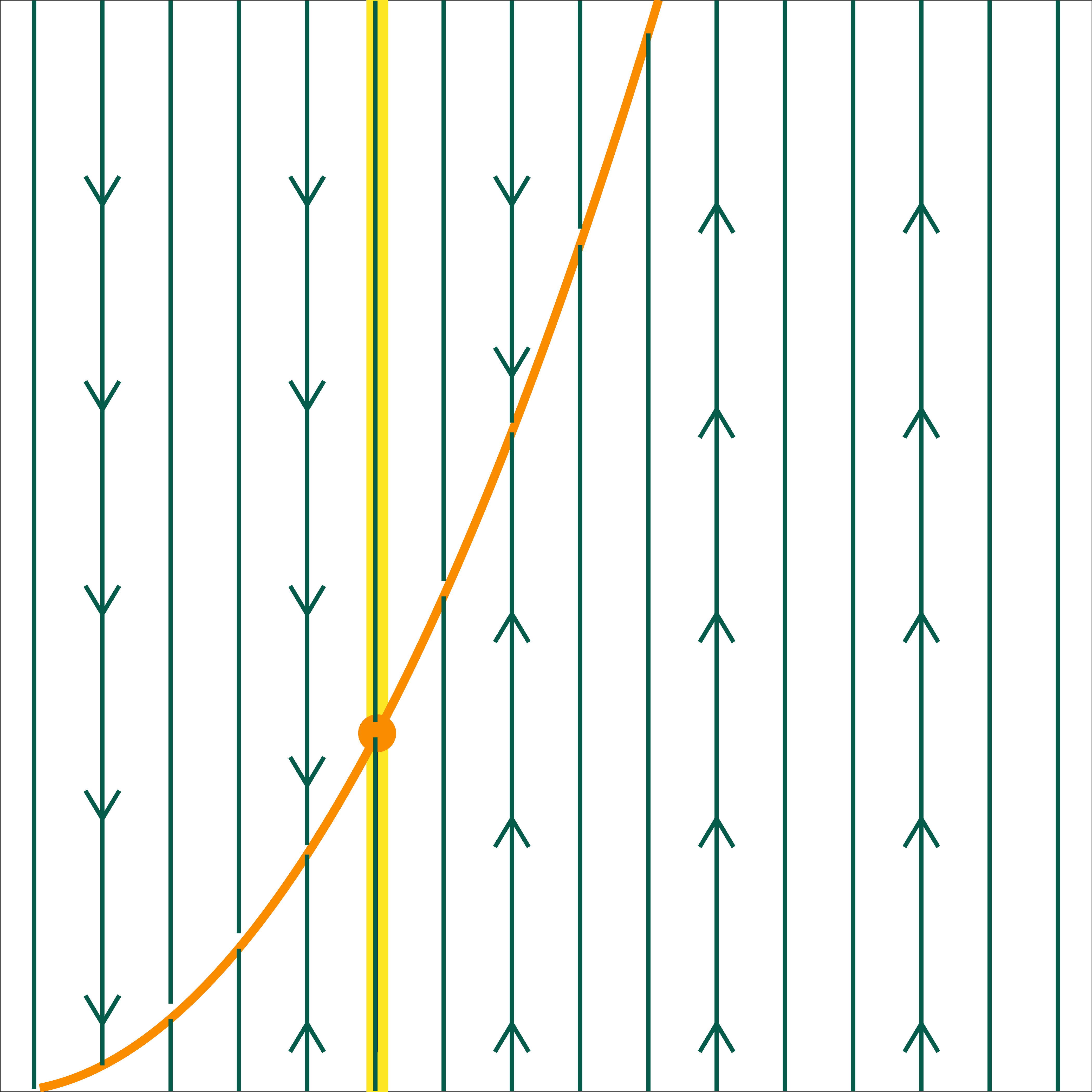}
    }} 
\end{tabular}
\caption{(a) A multistationary mass-action system ($\kk_1 = \kk_2 = 1$) and (c) its phase portrait. (b) Its image under an affine map as in \Cref{ex:MS-not}, and (d) its phase portrait, showing the system is \emph{not} multistationary. In (c) and (d), particular stoichiometric compatibility classes are highlighted in yellow.}
\label{fig:MS}
\end{figure}

\begin{ex}[\textbf{Affine transformations do not preserve multistationarity}] 
\label{ex:MS-not}
    Consider the mass-action system shown in \Cref{fig:MS-network}, with rate constants $\kk_1 = \kk_2 = 1$. Its phase portrait, shown in \Cref{fig:MS-traj}, shows that the system is \emph{multistationary}, i.e., there exists a stoichiometric compatibility class with more than one positive steady state. The figure highlights a particular stoichiometric compatibility class (in yellow) with two steady states. 

    We map the mass-action system under the affine transformation 
    \eq{ 
        \mm A (\yy) = \begin{pmatrix} 1 & 1 \\ 0 & -1 \end{pmatrix} \yy + \begin{pmatrix} 0 \\ 1 \end{pmatrix}, 
    }
    resulting in the network in \Cref{fig:MS-network-affine}. This mass-action system is \emph{not} multistationary. Indeed, this latter network is \emph{not capable} of multistationarity, because for generic $\kk_1$, $\kk_2$, the steady state set is given by 
    \eq{ 
        y^* = \frac{\kk_2}{\kk_1} (x^*)^2, 
    }
    where the value of $x^* > 0$ defines the stoichiometric compatibility class. Note that this example does not contradict \Cref{prop:ss}, which asserts bijection on the \emph{sets} of positive steady states, not how the sets intersect with any stoichiometric compatibility class. The latter is related to multistationarity.
\end{ex}

\begin{figure}[h!]
\centering
    \begin{subfigure}[b]{0.48\textwidth}\centering 
    \begin{tikzpicture}[scale=1]
        \draw [step=1, gray!50!white, thin] (-1.25,0) grid (3.5,3.25);
        \node at (0,2.75) {};
		\node at (0,-0.25) {};
            \draw [->, gray] (-1.25,0)--(3.5,0);
            \draw [->, gray] (0,0)--(0,3.25);
            \node [inner sep=0.2pt, outer sep=0pt] (1) at (0,0) {\blue{$\bullet$}};
            \node [inner sep=0.2pt, outer sep=0pt] (2) at (0,1) {\blue{$\bullet$}};
            \node [inner sep=0.2pt, outer sep=0pt] (3) at (1,0) {\blue{$\bullet$}};
            \node [inner sep=0.2pt, outer sep=0pt] (4) at (2,1) {\blue{$\bullet$}};
            \node [inner sep=0.2pt, outer sep=0pt] (5) at (3,0) {\blue{$\bullet$}};
        
            \node [below of=1, node distance=9pt] {$\yy_1$};
            \node [above of=2, node distance=9pt] {$\yy_2$};
            \node [below of=3, node distance=9pt] {$\yy_3$};
            \node [above of=4, node distance=9pt] {$\yy_4$};
            \node [below of=5, node distance=9pt] {$\yy_5$};

            \draw [-{stealth}, thick, blue, transform canvas={xshift=-0.35ex, yshift=0ex}] (1)--(2) ;
            \draw [-{stealth}, thick, blue, transform canvas={xshift=0.35ex, yshift=-0ex}] (2)--(1) ;
            \draw [-{stealth}, thick, blue, transform canvas={yshift=0.35ex, xshift=0ex}] (1)--(3) ;
            \draw [-{stealth}, thick, blue, transform canvas={yshift=-0.35ex, xshift=-0ex}] (3)--(1) ;
            \draw [-{stealth}, thick, blue, transform canvas={xshift=0.2ex, yshift=0.2ex}] (2)--(3) ;
            \draw [-{stealth}, thick, blue, transform canvas={xshift=-0.2ex, yshift=-0.2ex}] (3)--(2) ;
            \draw [-{stealth}, thick, blue, transform canvas={xshift=0.2ex, yshift=0.2ex}] (4)--(5) ;
            \draw [-{stealth}, thick, blue, transform canvas={xshift=-0.2ex, yshift=-0.2ex}] (5)--(4) ;
		\end{tikzpicture}
		\caption{}
        \label{fig:glycolysis-network}
    \end{subfigure}
\hspace{0.1cm} 
   \begin{subfigure}[b]{0.48\textwidth}\centering 
    \begin{tikzpicture}[scale=1]
        \draw [step=1, gray!50!white, thin] (-1.25,0) grid (3.5,3.25);
        \node at (0,2.75) {};
		\node at (0,-0.25) {};
            \draw [->, gray] (-1.25,0)--(3.5,0);
            \draw [->, gray] (0,0)--(0,3.25);
            \node [inner sep=0.2pt, outer sep=0pt] (1) at (0,0) {\blue{$\bullet$}};
            \node [inner sep=0.2pt, outer sep=0pt] (2) at (-0.866,0.5) {\blue{$\bullet$}};
            \node [inner sep=0.2pt, outer sep=0pt] (3) at (0.5,0.866) {\blue{$\bullet$}};
            \node [inner sep=0.2pt, outer sep=0pt] (4) at (0.134,2.2321) {\blue{$\bullet$}};
            \node [inner sep=0.2pt, outer sep=0pt] (5) at (1.5,2.5981) {\blue{$\bullet$}};

            \node [below of=1, node distance=11pt] {$\yy'_1$};
            \node [above of=2, node distance=11pt] {$\yy'_2$};
            \node [above right of=3, node distance=11pt] {$\yy'_3$};
            \node [left of=4, node distance=11pt] {$\yy'_4$};
            \node [right of=5, node distance=11pt] {$\yy'_5$};
            
            \draw [-{stealth}, thick, blue, transform canvas={xshift=-0.2ex, yshift=-0.2ex}] (1)--(2) ;
            \draw [-{stealth}, thick, blue, transform canvas={xshift=0.2ex, yshift=0.2ex}] (2)--(1) ;
            \draw [-{stealth}, thick, blue, transform canvas={yshift=0.2ex, xshift=-0.2ex}] (1)--(3) ;
            \draw [-{stealth}, thick, blue, transform canvas={yshift=-0.2ex, xshift=0.2ex}] (3)--(1) ;
            \draw [-{stealth}, thick, blue, transform canvas={xshift=0ex, yshift=0.3ex}] (2)--(3) ;
            \draw [-{stealth}, thick, blue, transform canvas={xshift=-0ex, yshift=-0.3ex}] (3)--(2) ;
            \draw [-{stealth}, thick, blue, transform canvas={xshift=0ex, yshift=0.3ex}] (4)--(5) ;
            \draw [-{stealth}, thick, blue, transform canvas={xshift=-0ex, yshift=-0.3ex}] (5)--(4) ;
		\end{tikzpicture}
		\caption{}
        \label{fig:glycolysis-network-affine}
    \end{subfigure}
    
\vspace{0.5cm}
%%%%%
    \begin{subfigure}[b]{0.3\textwidth}
    \centering 
    \includegraphics[width=1.85in]{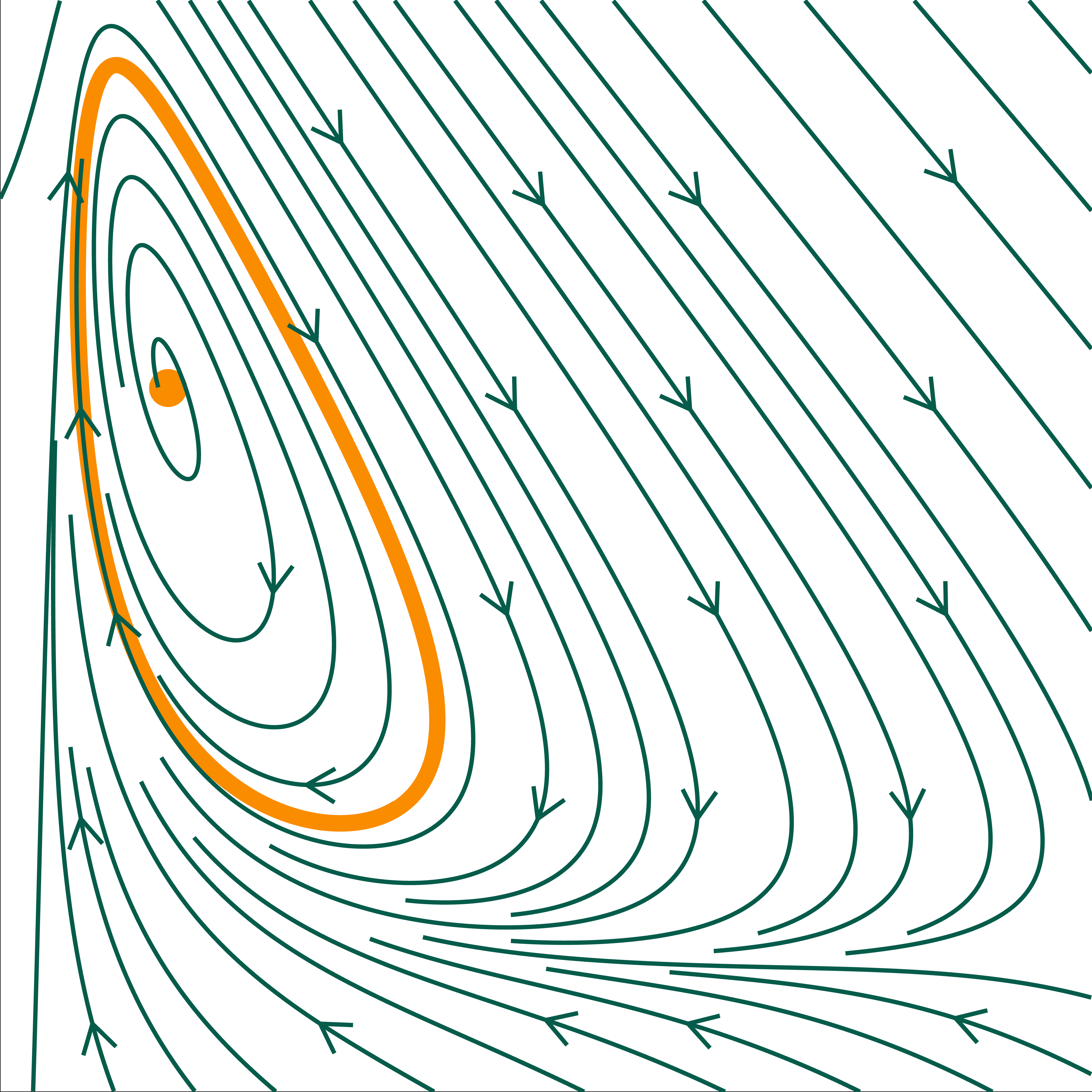}
    \caption{} 
	\label{fig:glycolysis-traj}
    \end{subfigure}
\hspace{3.05cm}
%%%%    
    \begin{subfigure}[b]{0.3\textwidth}
    \centering 
    \includegraphics[width=1.85in]{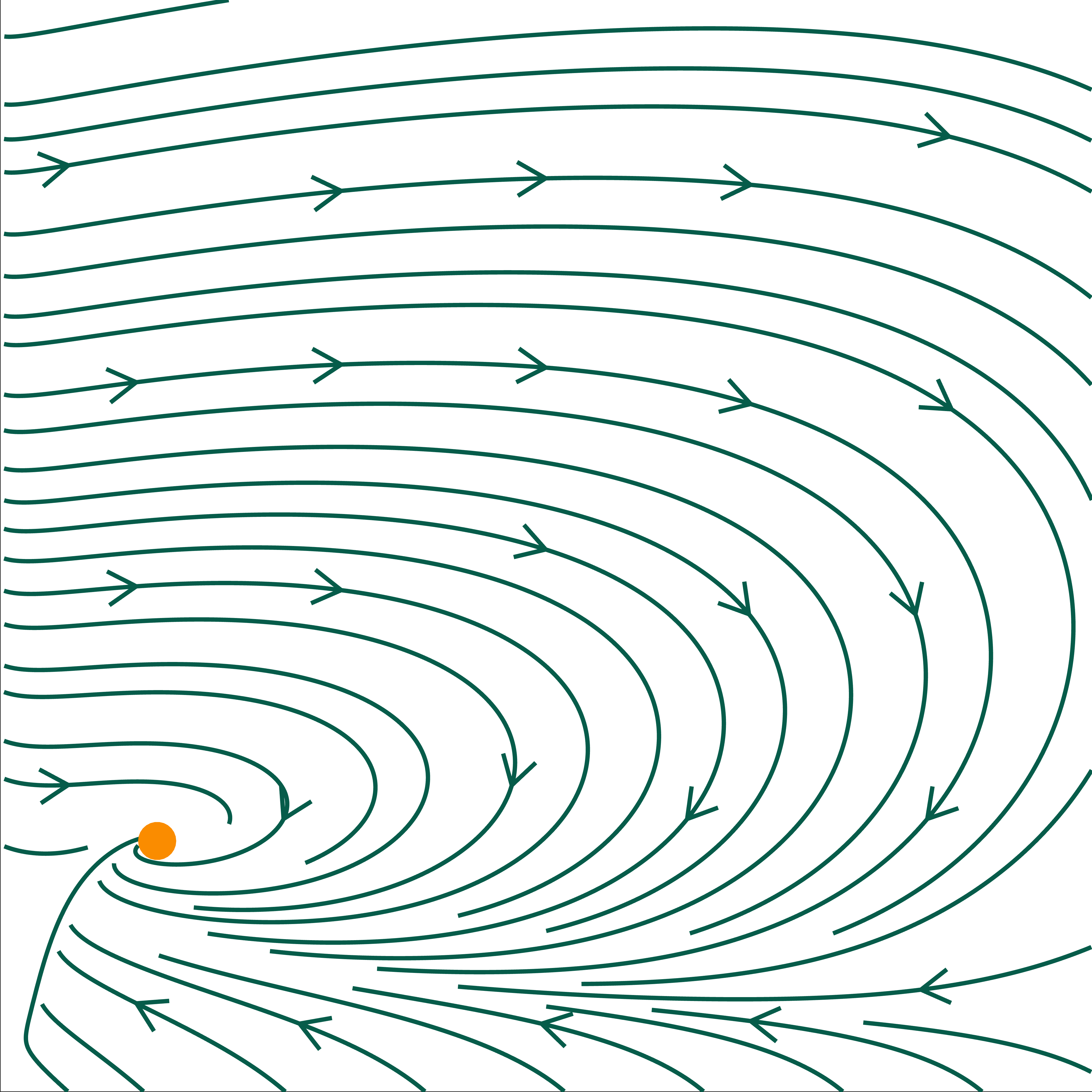}
    \caption{} 
	\label{fig:glycolysis-traj-affine}
    \end{subfigure}
\caption{(a) A version of the Brusselator and (b) its image under rotation by $\frac{\pi}{3}$. Their phase portraits are shown in (c) and (d) respectively. Note that the limit cycles are not in general preserved under invertible affine transformations. See \Cref{ex:glycolysis} for the rate constants used to generate the phase portraits. }
\label{fig:glycolysis}
\end{figure}

\begin{ex}[\textbf{Affine transformations do not preserve limit cycles}] 
\label{ex:glycolysis} 
    Let us consider in \Cref{fig:glycolysis-network}  a version of the  Brusselator~\cite{prigogine1979irreversibility}. A version of this is one of the prime examples of a Hopf bifurcation. For certain choices of rate constants, the mass-action system admits a stable limit cycle, as shown in \Cref{fig:glycolysis-traj} where we take 
    \eq{ 
        \kk_{12} = 0.5, \quad 
        \kk_{21} = \kk_{13} = \kk_{54} = 0.1, \quad 
        \kk_{23} = \kk_{32} = 0.01, \quad 
        \kk_{31} = \kk_{45} = 1. 
    }
    It is not difficult to show  that the stable limit cycle can disappear when we consider an affinely equivalent mass-action system. For example, if the network is rotated by $\frac{\pi}{3}$, as shown in \Cref{fig:glycolysis-network-affine}, there is no limit cycle, as seen in its phase portrait in \Cref{fig:glycolysis-traj-affine}. 
    Moreover, what was an unstable steady state is now stable under the affine transformation. This demonstrates that invertible affine transformations do not in general preserve qualitative dynamics.
\end{ex}

Although invertible affine transformations do not preserve multistationarity or limit cycles, they do preserve many important classes of reaction networks, especially those with dynamical implications. For example, since network structure is preserved, the images of reversible and weakly reversible networks are reversible and weakly reversible respectively. For such mass-action systems, there exists a steady state in every stoichiometric compatibility class~\cite{Boros2019}. Moreover, these systems are conjectured to be \emph{persistent} (no trajectory with positive initial condition has an $\omega$-limit point on the boundary of $\rrpp^n$) and \emph{permanent} (any trajectory eventually converges to a compact subset of the stoichiometric compatibility class)~\cite{Feinberg1987, CraciunNazarovPantea2013}.  

Two more general classes of reaction networks are also preserved under invertible affine transformations:  \emph{endotactic} networks~\cite{CraciunNazarovPantea2013} and \emph{strongly endotactic} networks~\cite{GopalkrishnanMillerShiu2014}. Geometrically, a network is endotactic if none of the reaction vectors point outside of the convex hull of source vertices, or the \emph{Newton polytope} of the network. A network is strongly endotactic if it is endotactic and on every facet of the Newton polytope, there is a reaction vector pointing away from the facet. Since affine transformations preserve convexity and half-spaces in $\rr^n$, endotacticity and strong endotacticty are preserved. 

In terms of dynamics, strongly endotactic mass-action systems are permanent~\cite{GopalkrishnanMillerShiu2014}. This was utilized to show that the Global Attractor Conjecture holds for complex-balanced systems with only one connected component. The slightly more general endotactic mass-action systems are conjectured to be permanent, where the case of $\rr^2$ was proved in \cite{CraciunNazarovPantea2013}. Again, the permanence of endotactic networks, which include all weakly reversible networks, is used towards solving the Global Attractor Conjecture in general~\cite{Craciun2019, CraciunNazarovPantea2013, CraciunDeshpande2020}.

\subsection{The disguised toric locus and projective transformations}
\label{sec:projective}

Invertible affine transformations were natural in our study of disguised toricity, since for the purpose of dynamical equivalence collinear vertices must be preserved.  Projective transformations also preserve collinear points. As such, we consider networks that are mapped via invertible projective maps. The examples presented in this section demonstrate that such maps do not preserve disguised toricity. Therefore, the class of invertible affine transformations is most naturally associated to disguised toric systems. In the remainder of this section, we give a series of examples showing that invertible projective transformations do not preserve the disguised toric locus (\Cref{ex:projective-counter-example}), do not preserve complex-balancing (\Cref{ex:CB-not_preserved-proj}), and do not preserve dynamical equivalence (\Cref{ex:projective-not-->DE}). 

\begin{defn}
\label{def:invertible-proj}
An \df{invertible projective transformation} on $\rr^n$ is a function of the form
\eq{
    P(\yy)=\left(\frac{\ell_1(\yy)}{\ell_0(\yy)},\dots,\frac{\ell_n(\yy)}{\ell_0(\yy)}\right)^\top,
}
where $\yy\in\rr^n$, $\ell_i(\yy)=a_{i0}+\sum_{j=1}^n a_{ij}y_j$ for $a_{ij}\in\rr$, and the $(n+1)\times(n+1)$ matrix $(a_{ij})_{0\leq i,j\leq n}$ is invertible over $\rr$.
\end{defn}

\begin{defn}
\label{def:homeomorphism} 
Let $G = (V,E)$ be a network in $\rr^n$. Let $P=(\frac{\ell_1}{\ell_0},\dots,\frac{\ell_n}{\ell_0})$ be an invertible projective transformation on $\rr^n$ such that $V\subset\rr^n$ does not intersect the locus where $\ell_0=0$. Let $P(V) \coloneqq \{ P(\yy) \st \yy \in V\}$, and $P(E) \coloneqq \{ P(\yy_i) \to P(\yy_j) \st \yy_i \to \yy_j \in E\}$. The \df{image of $G$ under $P$} is the graph $P(G) = (P(V), P(E))$.
\end{defn}

As before, if $(G,\vv\kk)$ is a mass-action system, then its image under $P$ is $(P(G),\vv\kk)$, where $\kk_{ij}$ is the rate constant of $P(\yy_i) \to P(\yy_j)$ for any $\yy_i \to \yy_j \in E$. 

\begin{figure}[h!]
    \centering
    \begin{subfigure}[b]{0.45\textwidth}\centering 
    \begin{tikzpicture}[scale=1.35]
        \draw [step=1, gray!50!white, thin] (0,0) grid (3.5,3.5);
        \node at (0,2.75) {};
		\node at (0,-0.25) {};
            \draw [->, gray] (0,0)--(3.5,0);
            \draw [->, gray] (0,0)--(0,3.5);
            \node [inner sep=0pt, outer sep=0pt] (1) at (3,0) {\blue{$\bullet$}};
            \node [inner sep=0pt, outer sep=0pt] (2) at (2,1) {\blue{$\bullet$}};
            \node [inner sep=0pt, outer sep=0pt] (3) at (1,2) {\blue{$\bullet$}};
            \node [inner sep=0pt, outer sep=0pt] (4) at (0,3) {\blue{$\bullet$}};
            \node [outer sep=1pt] at (1) [above right] {$\yy_1$};
            \node [outer sep=1pt] at (2) [above right] {$\yy_2$};
            \node [outer sep=1pt] at (3) [below left] {$\yy_3$};
            \node [outer sep=1pt] at (4) [left] {$\yy_4$};

            \draw [-{stealth}, thick, blue, transform canvas={xshift=0.2ex, yshift=0.2ex}] (1)--(2) ;
            \draw [-{stealth}, thick, blue, transform canvas={xshift=-0.2ex, yshift=-0.2ex}] (2)--(1) ;           
            \draw [-{stealth}, thick, blue, transform canvas={xshift=0.2ex, yshift=0.2ex}] (2)--(3) ;
            \draw [-{stealth}, thick, blue, transform canvas={xshift=-0.2ex, yshift=-0.2ex}] (3)--(2) ;   
            
            \draw [-{stealth}, thick, blue, transform canvas={xshift=0.2ex, yshift=0.2ex}] (3)--(4) ;
            \draw [-{stealth}, thick, blue, transform canvas={xshift=-0.2ex, yshift=-0.2ex}] (4)--(3) ;   
            
            \draw [-{stealth}, thick, blue, transform canvas={xshift=0.6ex, yshift=0.6ex}] (1)--(4) ;
            \draw [-{stealth}, thick, blue, transform canvas={xshift=-0.6ex, yshift=-0.6ex}] (4)--(1) ;
		\end{tikzpicture}
		\caption{}
        \label{fig:projective-counter-example-a}
    \end{subfigure}
    \begin{subfigure}[b]{0.45\textwidth}\centering 
    \begin{tikzpicture}[scale=1.35]
        \draw [step=1, gray!50!white, thin] (0,0) grid (3.5,3.5);
        \node at (0,2.75) {};
		\node at (0,-0.25) {};
            \draw [->, gray] (0,0)--(3.5,0);
            \draw [->, gray] (0,0)--(0,3.5);
            \node [inner sep=0pt, outer sep=0pt] (1) at (1,2) {\blue{$\bullet$}};
            % \node [inner sep=0.2pt, outer sep=0pt] (2) at (6/7,15/7) {\blue{$\bullet$}}; %(0.857,2.143)
            \node [inner sep=0pt, outer sep=0pt] (2) at (0.75,2.25) {\blue{$\bullet$}}; %% not to scale...
            \node [inner sep=0pt, outer sep=0pt] (3) at (0,3) {\blue{$\bullet$}};
            \node [inner sep=0pt, outer sep=0pt] (4) at (2,1) {\blue{$\bullet$}};
            \node [outer sep=0pt] at (1) [ right] {\,\,\,\,$\yy'_1$};
            \node [outer sep=0pt] at (2) [ left] {$\yy'_2$\,\,\,\,};
            \node [outer sep=1pt] at (3) [ left] {$\yy'_3$};
            \node [outer sep=0pt] at (4) [above right] {$\yy'_4$};

            \draw [-{stealth}, thick, blue, transform canvas={xshift=0.2ex, yshift=0.2ex}] (4)--(1) ;
            \draw [-{stealth}, thick, blue, transform canvas={xshift=-0.2ex, yshift=-0.2ex}] (1)--(4) ;      
            \draw [-{stealth}, thick, blue, transform canvas={xshift=-0.2ex, yshift=-0.2ex}] (2)--(1) ;   
            \draw [-{stealth}, thick, blue, transform canvas={xshift=0.2ex, yshift=0.2ex}] (1)--(2) ;   
            
            \draw [-{stealth}, thick, blue, transform canvas={xshift=0.2ex, yshift=0.2ex}] (2)--(3) ;
            \draw [-{stealth}, thick, blue, transform canvas={xshift=-0.2ex, yshift=-0.2ex}] (3)--(2) ;   
            
            \draw [-{stealth}, thick, blue, transform canvas={xshift=0.6ex, yshift=0.6ex}] (4)--(3) ;
            \draw [-{stealth}, thick, blue, transform canvas={xshift=-0.6ex, yshift=-0.6ex}] (3)--(4) ;
		\end{tikzpicture}
		\caption{}
		\label{fig:projective-counter-example-b}
    \end{subfigure}
\caption{(a) A complete graph $G$ on four points on a line. (b) The image $P(G)$ under the projective map \eqref{eq:ex-proj}. \Cref{ex:projective-counter-example} shows that $\widehat{\mc K}(P(G)) \setminus \widehat{\mc K}(G) \neq \varnothing$, which implies that the property of being disguised toric is not preserved by projective maps in general.}
\label{fig:projective-counter-example}
\end{figure}
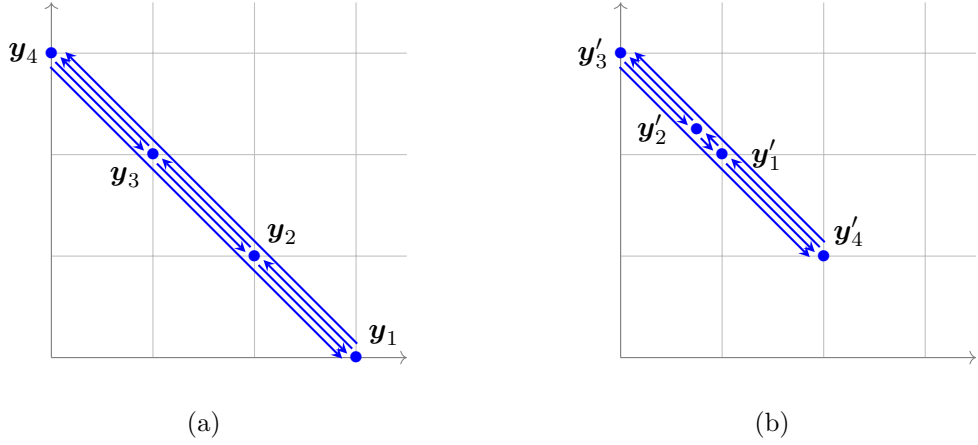 

\begin{ex}[\textbf{Projective transformations do not preserve the property of being disguised toric}] 
\label{ex:projective-counter-example}
    Let $G$ be the complete graph on the vertices $(3,0)$, $(2,1)$, $(1,2)$, $(0,3)$, as shown in \Cref{fig:projective-counter-example-a}. The disguised toric locus of this graph was characterized in \cite{BCS22}; we follow the authors' notation (except we have written $\kk_{ij}$ in place of $k_{ij}$ for the rate constant of $\yy_i \to \yy_j$). In  \cite[Section 4]{BCS22} it was shown that for the network in \Cref{fig:projective-counter-example-a}, also called \emph{the quadrilateral on a line}, the positive orthant of the parameters $\vv\kk$ is partitioned into four regions (see \cite[Proposition 4.1]{BCS22}),  based on linear inequalities on $\kk_{ij}$. Three of these regions always give rise to disguised toric systems (these regions are called the  single-sign-change chambers in \cite[Definition 4.2]{BCS22}), whereas for the last region the parameters need to satisfy an additional nonlinear polynomial  inequality for the system to be disguised toric \cite[Theorem 4.3]{BCS22}. 

    Let us now consider the following projective transformation 
    \eqn{\label{eq:ex-proj}
        P(x,y)=\left(\frac{-17x+7y+3}{-17x+3y+3},\frac{-33x+3y+3}{-17x+3y+3}\right) .
    }
    The vertices are mapped to 
    \eq{ 
        \yy'_4 = P(0,3)=(2,1),  \qquad &  \yy'_2 = P(2,1)= (6/7, 15/7), \\
        \yy'_3 = P(1,2)=(0,3) ,  \qquad & \yy'_1 = P(3,0)=(1,2). 
    }
    So $(P(G),\vv\kk)$, consisting of the reactions 
    \begin{center} 
    \begin{tikzpicture}
        % 3<=>2
        \node (1a) at (0,0) [left] {$(0,3)$};
        \node (1b) at (1.5,0) [right] {$(\frac{6}{7}, \frac{15}{7})$}; 
        \draw [revrxn, transform canvas={yshift=1.5pt}] (1a)--(1b) node [midway, above] {\footnotesize $\kk_{32}$};
        \draw [revrxn, transform canvas={yshift=-1.5pt}] (1b)--(1a) node [midway, below] {\footnotesize $\kk_{23}$};
        % 2<=>1
        \begin{scope}[shift={(5,0)}] 
        \node (1a) at (0,0) [left] {$(\frac{6}{7}, \frac{15}{7})$};
        \node (1b) at (1.5,0) [right] {$(1,2)$}; 
        \draw [revrxn, transform canvas={yshift=1.5pt}] (1a)--(1b) node [midway, above] {\footnotesize $\kk_{21}$};
        \draw [revrxn, transform canvas={yshift=-1.5pt}] (1b)--(1a) node [midway, below] {\footnotesize $\kk_{12}$};
        \end{scope}
        % 1<=>4
        \begin{scope}[shift={(0,-1)}] 
        \node (1a) at (0,0) [left] {$(1,2)$};
        \node (1b) at (1.5,0) [right] {$(2,1)$}; 
        \draw [revrxn, transform canvas={yshift=1.5pt}] (1a)--(1b) node [midway, above] {\footnotesize $\kk_{14}$};
        \draw [revrxn, transform canvas={yshift=-1.5pt}] (1b)--(1a) node [midway, below] {\footnotesize $\kk_{41}$};
        \end{scope}
        % 3<=>4
        \begin{scope}[shift={(5,-1)}] 
        \node (1a) at (0,0) [left] {$(0,3)$};
        \node (1b) at (1.5,0) [right] {$(2,1)$}; 
        \draw [revrxn, transform canvas={yshift=1.5pt}] (1a)--(1b) node [midway, above] {\footnotesize $\kk_{34}$};
        \draw [revrxn, transform canvas={yshift=-1.5pt}] (1b)--(1a) node [midway, below] {\footnotesize $\kk_{43}$};
        \end{scope}
    \end{tikzpicture}
    \end{center}
    is also a complete graph with four vertices, as shown in \Cref{fig:projective-counter-example-b}. 

    Now, consider a vector of positive real numbers $\vv\kk = (\kk_{ij})_{ij}$ with the following properties:
    \begin{enumerate}[label={(\arabic*)}]
    \item\label{proj-counter-ex::chamber4}  $\kk_{21} - \kk_{23} -  2\kk_{24} > 0 $   and     $2\kk_{31} + \kk_{32} - \kk_{34}  < 0$,   
    
    \item\label{proj-counter-ex::chamber4-CB}  $ (\kk_{12} + 2 \kk_{13} + 3 \kk_{14})  (3 \kk_{41} + 2 \kk_{42} + \kk_{43})  < \vert \kk_{21} - \kk_{23} - 2 \kk_{24}\vert  \vert 2 \kk_{31} + \kk_{32} - \kk_{34} \vert$,  %(i.e. NOT CB, which is why the inequality is the opposite of their paper)
    
    \item\label{proj-counter-ex::chamberneq4} 
    $6\kk_{23} < \kk_{21} + 8\kk_{24} $ or 
    $\kk_{14} < \frac{1}{7} \kk_{12} + \kk_{13}$. %(i.e., not in chamber 4)
    \end{enumerate}
    For example, we can take
    \begin{gather*}
        \kk_{21} = 4,\quad  \kk_{23} = 1,\quad  \kk_{24} = 1,\quad  \kk_{31} = \frac{1}{8},\quad  \kk_{32} = \frac{1}{2},\quad  \kk_{34} = 1,\quad  \\
        \kk_{12} = \frac{1}{512},\quad \kk_{13} = \frac{1}{128},\quad  \kk_{14} = \frac{1}{128},\quad  \kk_{41} = 1,\quad  \kk_{42} = 1,\quad  \kk_{43} = 1.  
    \end{gather*} 

    We claim that any choice of $\vv\kk$ satisfying conditions \ref{proj-counter-ex::chamber4}--\ref{proj-counter-ex::chamberneq4} implies that $(G,\vv\kk)$ is \emph{not} a disguised toric dynamical system, but $(P(G), \vv\kk)$ is. More precisely, condition \ref{proj-counter-ex::chamber4} says that $(G,\vv\kk)$ lives in Chamber $4$~\cite[Theorem 4.3(1)]{BCS22}, with the net reaction vector $\vv w_3$ pointing towards $\vv y_4$ and the net reaction vector $\vv w_2$ pointing towards $\vv y_1$. In this case, $(G,\vv\kk)$ is \emph{not} disguised toric exactly when condition \ref{proj-counter-ex::chamber4-CB} holds. In other words, conditions \ref{proj-counter-ex::chamber4}--\ref{proj-counter-ex::chamber4-CB} together imply that $(G,\vv\kk)$ is \emph{not} a disguised toric dynamical system.
    
    Lastly, we argue that condition \ref{proj-counter-ex::chamberneq4} implies $(P(G), \vv\kk)$ is a disguised toric dynamical system. Notice that the net reaction vectors of $\yy'_1$ and $\yy'_2$ are 
    \eqn{ \label{eq:net-rxn-vect-ex_w12} 
        \vv w'_1 = \left(  \frac{1}{7} \kk_{12} + \kk_{13} - \kk_{14} \right) \begin{pmatrix*}[r] -1 \\ 1 \end{pmatrix*} 
        \quad \text{and} \quad 
        \vv w'_2 = \frac{1}{7} \left( \kk_{21} + 8 \kk_{24}  - 6\kk_{23} \right)  \begin{pmatrix*}[r] 1 \\ -1 \end{pmatrix*}
    }
    respectively. Condition \ref{proj-counter-ex::chamberneq4} says that either $\vv w'_2$ points towards $\yy'_1$ or $\vv w'_1$ points towards $\yy'_2$. In other words, satisfying \ref{proj-counter-ex::chamberneq4} puts $(P(G),\vv\kk)$ in the analog of Chambers 1--3 in the language of \cite{BCS22}.

    We now show that $(P(G),\vv\kk)$ is always disguised toric. Suppose we are in the case of $6\kk_{23} < \kk_{21} + 8\kk_{24} $ and $\kk_{14} < \frac{1}{7} \kk_{12} + \kk_{13}$, so that $\vv w_1'$ points towards $\yy_2'$ and $\vv w_2'$ points towards $\yy_1'$. We claim that this is dynamically equivalent to the following single-target network $(G', \vv\alpha)$. Let $\yy'_5 =  ( \frac{13}{14}, \frac{29}{14})^\top$, which lies between $\yy'_1$ and $\yy'_2$, and consider the reactions 
    \eq{ 
        \yy'_1 \xrightarrow{\,\,\alpha_1\,\,} \yy'_5, \qquad 
        \yy'_2 \xrightarrow{\,\,\alpha_2\,\,} \yy'_5,  \qquad 
        \yy'_3 \xrightarrow{\,\,\alpha_3\,\,} \yy'_5,  \qquad 
        \yy'_4 \xrightarrow{\,\,\alpha_4\,\,} \yy'_5,  
    }
    with rate constants 
    \begin{gather*} 
        \alpha_1 = 14\left(\frac{1}{7}\kk_{12}+\kk_{13}-\kk_{14}\right), \quad 
        \alpha_3 = \frac{14}{13} \left(\frac{6}{7}\kk_{32} + \kk_{31} + 2\kk_{34}\right), \\ 
        \alpha_2 = 2\left(\kk_{21}+8\kk_{24} - 6\kk_{23}\right),   
         \quad 
        \alpha_4 = \frac{14}{15}\left(\frac{8}{7}\kk_{42}+\kk_{41}+2\kk_{43}\right).
    \end{gather*}
    Note that $\alpha_i > 0$, and are chosen so that the net reaction vectors are \eqref{eq:net-rxn-vect-ex_w12} and 
    \eq{ 
        \vv w_3' = \left( \frac{6}{7}\kk_{32} + \kk_{31} + 2\kk_{34} \right)  \begin{pmatrix*}[r] 1 \\ -1 \end{pmatrix*} 
        \quad \text{and} \quad 
        \vv w_4' = \left(\frac{8}{7}\kk_{42} + \kk_{41} + 2 \kk_{43} \right)  \begin{pmatrix*}[r] -1 \\ 1 \end{pmatrix*}. 
    }
    In other words, the rate constants $\alpha_i$ are chosen so that $(P(G),\vv\kk)$ is dynamically equivalent to the single-target network $(G', \vv\alpha)$. Single-target networks were characterized in \cite{CraciunJinYu_STN}; in particular, a single-target network is dynamically equivalent to a detailed-balanced system---hence disguised toric---if and only if the unique sink is in the relative interior of the convex hull of source vertices. This is clearly the case for $(G',\vv\alpha)$, as $\yy'_5$ lies midway between $\yy'_1$ and $\yy'_2$. Therefore, $(P(G),\vv\kk)$ is disguised toric. It is not difficult to show that if only one of the inequalities in condition \ref{proj-counter-ex::chamberneq4} is satisfied, then the mass-action system is still dynamically equivalent to a stable single-target network, with the sink placed appropriately. 
    
    In particular, the invertible transformation $P^{-1}$ takes the disguised toric dynamical system $(P(G),\vv\kk)$ to the dynamical system $(G,\vv\kk)$ which is not disguised toric. In other words, the property of being disguised toric is in general not preserved under projective transformation.
\end{ex}

\begin{ex}[\textbf{Projective transformations do not preserve complex-balancing}] 
\label{ex:CB-not_preserved-proj}
    Consider the complete graph $G$ embedded in $\rr^1$ with vertices at $0$, $1$, $2$. Let $P$ be an invertible projective transformation defined at $0,1,2$, and let $a=P(0)$, $b=P(1)$, $c=P(2)$. The complex balanced condition on $(G,\vv\kk)$ is equivalent to  $K_1K_3=K_2^2$, where the $K_i$ are the tree constants as given by the Matrix-Tree Theorem (for a definition, see \cite[page 5]{CraciunDickensteinShiuSturmfels2009}); each $K_i$ is a sum of monomials in the $\kk_{j\ell}$ determined by the network connectivity. On the other hand, since $(\yy_3 - \yy_2)/(c-b) = (\yy_2-\yy_1)/(b-a)$, by \cite[Theorem 2]{FeliuCappellettiWiuf2018} the complex-balanced condition on $(P(G),\vv\kk)$ is  
    \eq{ 
        K_1^{c-b}K_3^{b-a}=K_2^{c-a}.
    }
    When $a+c\neq 2b$, which holds for a generic choice of $P$, the algebraic sets $K_1^{c-b}K_3^{b-a}=K_2^{c-a}$ and $K_1K_3=K_2^2$ are not equal, and so for generic $\vv\kk$ with $(G,\vv\kk)$ complex-balanced, we see $(P(G),\vv\kk)$ is not complex-balanced.
\end{ex}

\begin{ex}[\textbf{Projective transformations do not preserve dynamical equivalence}] 
\label{ex:projective-not-->DE}
    Consider the complete graph $G$ embedded in $\rr^1$ with vertices at $0$, $1$, $2$. Say that $(G,\vv\kk)$ and $(G,\vv\alpha)$ are dynamically equivalent but $\vv\kk\neq\vv\alpha$. Letting $\beta_{ij} \coloneqq \kk_{ij}-\alpha_{ij}$ and
    $\vv \beta=\begin{pmatrix}
    \beta_{01} & 
    \beta_{02} & 
    \beta_{12} & 
    \beta_{10} & 
    \beta_{20} & 
    \beta_{21}
    \end{pmatrix}^\top$,
    then $(G,\vv\kk)$ and $(G,\vv\alpha)$ are dynamically equivalent if and only if  $\vv \beta\neq \vv 0$ and 
    \eqn{\label{eqn:P1-DE1}
        \begin{pmatrix}
        1 & 2 & 0 & 0 & 0 & 0\\
        0 & 0 & 1 & -1 & 0 & 0\\
        0 & 0 & 0 & 0 & -2 & -1
        \end{pmatrix}
        \vv \beta=\vv 0.
    }
    
    Let $P$ be an invertible projective transformation defined at $0,1,2$, and let $a=P(0)$, $b=P(1)$, $c=P(2)$. Further suppose that $a+c\neq 2b$, which holds for a generic invertible projective transformation.

    Note that $(P(G),\vv\kk)$ and $(P(G),\vv\alpha)$ are dynamically equivalent if and only if
    \eqn{\label{eqn:P1-DE2}
        \begin{pmatrix}
        b-a & c-a & 0 & 0 & 0 & 0\\
       0 & 0 & c-b & a-b & 0 & 0 \\
        0 & 0 & 0 & 0 & a-c & b-c
        \end{pmatrix}
        \vv\beta=\vv0.
    }
    Since $a$, $b$, $c$ are distinct, the non-zero vector $\vv\beta$ satisfies both \eqref{eqn:P1-DE1} and \eqref{eqn:P1-DE2} if and only if  $c-a=2(b-a)$, $c-b=-(a-b)$, and $a-c=2(b-c)$; this is equivalent to the condition that $a+c=2b$. Thus, $(P(G),\vv\kk)$ and $(P(G),\vv\alpha)$ are no longer dynamically equivalent. This implies that equivalence classes of dynamically equivalent mass-action systems are not preserved under invertible projective transformations. 
\end{ex}

%%%%%%%%%%%%%%%%%%%%%%%%%%%%%%%%%%%%%%%%%%%%%%%%%%%%%%%%%%%%%%%%%%%%%%%%%%%%%%%%%%%%%%%%%%%%%%%%%%
\bibliographystyle{siam}
\bibliography{cit}
\end{document}